\documentclass[10pt,a4paper]{article}
\usepackage[margin=1.3in]{geometry}
\usepackage[T1]{fontenc}
\usepackage{tikz}
\usetikzlibrary{positioning}
\usepackage[utf8]{inputenc}
\usepackage{graphicx}
\usepackage{mathtools}
\usepackage{amssymb}
\usepackage{amsthm}
\usepackage{thmtools}
\usepackage{xcolor}
\usepackage{nameref}
\usepackage{hyperref}
\usepackage{amsmath}
\usepackage{amsfonts}
\usepackage{wrapfig}
\usepackage[autostyle]{csquotes}
\usepackage{layout}
\usepackage{tabularx}
\usepackage{authblk}
\usetikzlibrary{shapes.geometric, positioning}
\usepackage{caption}
\usepackage{subcaption}
\usetikzlibrary{positioning,chains,fit,shapes,calc}
\usepackage{tikz}
\theoremstyle{definition}
\newtheorem{defn}{Definition}[section]
\newtheorem{theorem}{Theorem}[section]
\newtheorem{corollary}{Corollary}[theorem]
\newtheorem{lemma}[theorem]{Lemma}

\newtheorem{note}{Note}

\newtheorem{exmp}{Example}[section]
\theoremstyle{remark}
\newtheorem{remark}{Remark}[section]

\date{}
\begin{document}

	\title{\sc Intersection Hypergraph on $D_n$ }
	
	\author{{ \sc Sachin Ballal}\footnote{Corresponding Author}~~  and \sc{ Ardra A N} }
	\affil{School of Mathematics and Statistics, University of Hyderabad,
		500046, India\\ sachinballal@uohyd.ac.in, 23mmpp02@uohyd.ac.in}

	\date{}
	\maketitle
	
	\begin{abstract}
		Let $G$ be a group and $S$ be the set of all non-trivial proper subgroups of $G$. \textit{The intersection hypergraph of $G$}, denoted by $\tilde{\Gamma}_\mathcal{H}(G)$, is a hypergraph whose vertex set  is  $\{H \in S \,\, | \,\, H \cap K = \{e\} \,\, \text{for some} \,  K \in S  \}$ and  hyperedges are the maximal subsets of the vertex set with the property that any two vertices in it have a trivial intersection. The aim of this paper is to study the intersection hypergraph of dihedral groups, $\tilde{\Gamma}_\mathcal{H}(D_n)$. We examine some of the structural properties, viz., diameter, girth and chromatic number of $\tilde{\Gamma}_\mathcal{H}(D_n)$. Also, we provide characterizations for  hypertreees, star structures of  $\tilde{\Gamma}_\mathcal{H}(D_n)$, and investigate the planarity and non-planarity of  $\tilde{\Gamma}_\mathcal{H}(D_n)$.
	\end{abstract}
	\noindent \small \textbf{Keywords:} Hypergraphs, subgroups, diameter, chromatic number, planar, genus etc.\\
	\noindent \small \textbf{AMS Subject Classification} 05C25, 05C65
	\section{Introduction}	
	Hypergraph is a generalization of graph, allowing the analysis of multiple relationships rather than just pair-wise relationships. The notion of hypergraphs has been introduced by  C. Berge\cite{berge}. The study of hypergraphs on algebraic structures is an emerging area as a generalization of graph to extend some prominent results from graph theory.  In \cite{cameron2021graphs}, P. J. Cameron introduced different types of graphs on groups whose edges reflect the group structures in some way. D. F. Anderson and S. Al-Kaseasbeh \cite{anderson2023intersection} studied on the  completeness, bipartiteness and planarity of \textit{intersection subgroup graph of a group}. In \cite{devi2023complement}, P. Devi and R. Rajkumar studied on \textit{the complement of the intersection graph of subgroups of a group}. They characterized finite groups whose graphs are bipartite, tree, star and planar.   In \cite{eslahchi2007k}, Ch. Eslahchi and A. M. Rahimi introduced and studied \textit{k-zero divisor hypergraph} of a commutative ring. Later, K. Selvakumar \textit{et al.}\cite{amritha2022ideal,selvakumar2019genus,selvakumar2020k} introduced and studied on \textit{k-maximal hypergraph} and \textit{k-maximal ideal hypergraph} as the extension of the concepts of  co-maximal elements and co-maximal ideals of a ring. Also,  \textit{k-annihilating ideal hypergraph} of a ring was studied by  K. Selvakumar \textit{et al.}\cite{selvakumar2019genus1,selvakumar2019k}. Recently, in \cite{cameron2023hypergraphs}, P. J. Cameron \textit{et al.} introduced and studied four types of hypergraphs, namely commuting hypergaphs, power hypergraphs, enhanced power hypergraphs, and generating hypergraphs defined on groups. Motivated from the study of hypergraphs on algebraic structures, we have introduced and studied  hypergraphs on the subgroups of a group, in particular the subgroups of dihedral groups.

	A \textit{hypergraph} $\mathcal{H}$ is a pair $(V(\mathcal{H}),E(\mathcal{H}))$ of disjoint sets, where $V(\mathcal{H})$ is a non-empty finite set whose elements are called vertices and the elements of $E(\mathcal{H})$ are non-empty subsets of $V(\mathcal{H})$ called hyperedges. Any hypergraph $\mathcal{H}'=(V'(\mathcal{H}'),E'(\mathcal{H}'))$ such that $V'(\mathcal{H}')  \subseteq V(\mathcal{H})$ and $E'(\mathcal{H}') \subseteq E(\mathcal{H})$ is called a \textit{subhypergraph} of $\mathcal{H}$. The \textit{path} is a vertex–hyperedge alternative sequence, 
	where the vertex belongs to the consecutive hyperedge in the sequence. The \textit{cycle} 
	is a path whose first vertex is the same as the last vertex. The \textit{length} of a path is the
	number of hyperedges in the path. A hypergraph is \textit{connected} if there exists a path between any pair of vertices, otherwise 
	it is \textit{disconnected}. The \textit{distance} between two vertices is the minimum length of the 
	path connecting these two vertices. The \textit{diameter} of a hypergraph is the maximum 
	distance among all pairs of vertices.  The \textit{girth} of a hypergraph is the length of a shortest cycle it contains. A hypergraph is called a \textit{star} if there is a vertex which belongs to all hyperedges.  
	The \textit{incidence graph (or bipartite representation)} $\mathcal{I}(\mathcal{H})$ of $\mathcal{H}$ is a bipartite graph with vertex $V(\mathcal{H}) \cup E(\mathcal{H})$ and a vertex $v \in V(\mathcal{H}) $ is adjacent to a vertex $u \in E(\mathcal{H})$ iff $v \in u$ in $\mathcal{H}$.

	A \textit{proper vertex-coloring} (often simply called a proper coloring) of a hypergraph
	$\mathcal{H}$ is an assignment of colors to the vertices of $\mathcal{H}$ such that no hyperedge contains vertices all of the
	same color. The \textit{chromatic number} of $\mathcal{H}$, denoted by $\chi(\mathcal{H})$, is the minimum
	number of colors used by a proper vertex-coloring of $\mathcal{H}$.
	The \textit{chromatic index} of a hypergraph $\mathcal{H}$,  denoted by $\chi^{'}(\mathcal{H})$, is the minimum number of
	colors needed to color the edges of $\mathcal{H}$ such that no two intersecting edges have the same
	color.

	We denote by $S_n$ the surface obtained from the sphere $S_0$  by adding $n$ handles. The number $n$ is called the \textit{genus of the surface} $S_n, n\geq 0$.  The \textit{orientable genus} of a graph $G$, denoted by $g(G)$, is the minimum genus of
	a surface in which $G$ can be embedded.  A surface obtained by adding $k$ crosscaps to $S_0$ 
	is known as the non-orientable surface $N_k$. The number $k$ is called the crosscap of $N_k$. The non-orientable genus of
	a graph $G$, denoted by $\tilde{g}(G)$, is the smallest integer $k$ such that $G$ can be embedded on $N_k$. A planar
	graph is a graph of genus (crosscap) 0, a toroidal graph is a graph of
	genus 1, and a projective-planar graph is a graph of crosscap 1. Further, note that if $H$ is a subgraph of a graph $G$, then $g(H) \leq g(G)$ and $\tilde{g}(H) \leq \tilde{g}(G)$. For more details on graphs and hypergraphs, one may refer \cite{voloshinhypergraphs,white1985graphs}.

	In Section 2, we have introduced the \textit{intersection hypergraph of a group} and analyzed some of the structural properties, viz., diameter, girth, chromatic index and chromatic number of the intersection hypergraph  $\tilde{\Gamma}_\mathcal{H}(D_n)$ of $D_n$. Also, we have characterized hypertrees and star hypergraphs in terms of $n$. In Section 3, we have characterized planar and non-planar hypergraph of $D_n$.

	\section{Intersection hypergraph of $D_n$ and its structural properties}
	We begin this section with the definition of the intersection hypergraph on the subgroups of a group. Our focus is mainly on the study of the intersection hypergraphs  $\tilde{\Gamma}_\mathcal{H}(D_n)$ of dihedral groups. We have examined the structural properties of $\tilde{\Gamma}_\mathcal{H}(D_n)$ such as diameter, girth, chromatic index and chromatic number. Also, we have characterized the hypergraphs $\tilde{\Gamma}_\mathcal{H}(D_n)$ that are star hypergraphs and hypertrees. The following definition is motivated from \cite{cameron2023hypergraphs}.
	
	\begin{defn}
		Let $G$ be a group and $S$ be the set of all non-trivial proper subgroups of $G$. \textit{The intersection hypergraph of $G$}, denoted by $\tilde{\Gamma}_\mathcal{H}(G)$, is a hypergraph whose vertex set,\\ $V =\{H \in S \,\, | \,\, H \cap K = \{e\} \,\, \text{for some $ K \in S $} \}$ and $E \subseteq V$ is a hyperedge if and only if \begin{enumerate}
			\item For distinct $H,K \, \in E$, $H \cap K = \{e\}.$
			\item There does not exist  $E' \supset E$ which  satisfies (1).
		\end{enumerate}
		
	\end{defn}
	\begin{exmp}
		Consider the Klein-4 group, $$V_4=\{e,a,b,c \, | a^2=b^2=c^2=e, \,  ab=c=ba, \, ac=b=ca, \, bc=a=cb\}.$$ Then, the vertex set of $\tilde{\Gamma}_\mathcal{H}(V_4)$ is $ V= \{ \{e,a\}, \{e,b\}, \{e,c\}\}$ and the hyperedge set is $E= \{ \{\{e,a\}, \{e,b\}, \{e,c\}\}\}$. The hypergraph, $\tilde{\Gamma}_\mathcal{H}(V_4)$ is given in  Figure  \ref{klein4group}.
		
		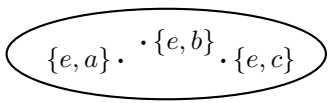
\begin{figure}[h]
			\centering
			\begin{tikzpicture}[scale=.3]
				% Draw the oval
				\draw[thick] (0,0) ellipse (7cm and 2cm);
				
				% Place the three points inside the oval with labels
				\filldraw (-1cm, 0.5cm) circle (2pt) node[right] {$\{e,b\}$};    % Point near the center, slightly above
				\filldraw (-2.0cm, -0.3cm) circle (2pt) node[ left] {$\{e,a\}$}; % Point on the left side
				\filldraw (2.5cm, -0.3cm) circle (2pt) node[right] {$\{e,c\}$};  % Point on the right side
			\end{tikzpicture}
			\caption{$\tilde{\Gamma}_\mathcal{H}(V_4)$}
			\label{klein4group}
		\end{figure}

	\end{exmp}
	\begin{exmp}\label{d4eg}
		Consider the dihedral group of order 8, $D_4= <a,b\,|a^4=e=b^2, bab^{-1}=a^{-1}>$. The vertex set of $\tilde{\Gamma}_\mathcal{H}(D_4)$ is $ V=\{H_2,H_3,H_4,H_5, H_6,H_7,H_8,H_9\}$, where 
		$H_2=<a^2>, H_3=<b>,$ \\$ H_4=<ab>, H_5=<a^2b>, H_6=<a^3b>, H_7=<a>, H_8=<a^2,b>,$ $ H_9=<a^2,ab> $. The hyperedge set of 	$\tilde{\Gamma}_\mathcal{H}(D_4)$	is $E =\{e_1,e_2,e_3,e_4\}$, where  
		$e_1 = \{H_2,H_3,H_4,H_5,H_6\}$,
		$e_2=\{H_3,H_4,H_5,H_6,H_7\}$,
		$e_3=\{H_3,H_5,H_9\}$ and
		$e_4=\{H_4,H_6,H_8\}$. Figure \ref{d4} depicts the hypergraph  $\tilde{\Gamma}_\mathcal{H}(D_4)$ on $D_4$.
		\begin{figure}[h]
			\centering
			
			\begin{tikzpicture}[scale=.8]
				
				% Draw ellipses with clear borders
				\draw[thick] (0,0) ellipse (3 and .8);
				\draw[thick] (1.5,0) ellipse (3 and .8);
				\draw[thick] (-.5,-.5) ellipse (.8 and 2);
				\draw[thick] (1.5,-.5) ellipse (.8 and 2);

				% Add labels for ellipses
				\node at (-2, 1) {$e_1$};
				\node at (4, 1) {$e_2$};
				\node at (-.5, 1.7) {$e_3$};
				\node at (1.5, 1.7) {$e_4$};

				% Add nodes
				
				\filldraw (-2,-0.2cm) circle (2pt) node[above] {${H_2}$};
				\filldraw (-1,-0.2cm) circle (2pt) node[above] {${H_3}$};
				\filldraw (0,-0.2cm) circle (2pt) node[above] {${H_5}$};
				\filldraw (4, -0.2cm) circle (2pt) node[above] {${H_7}$};
				\filldraw (1, -0.2cm) circle (2pt) node[above] {${H_4}$};
				\filldraw (2, -0.2cm) circle (2pt) node[above] {${H_6}$};
				\filldraw (-0.5, -1cm) circle (2pt) node[below] {${H_9}$};
				\filldraw (1.5, -1cm) circle (2pt) node[below] {${H_8}$};

			\end{tikzpicture}
			\caption{$\tilde{\Gamma}_\mathcal{H}(D_4)$}
			\label{d4}
			
		\end{figure} 
	\end{exmp}\begin{flushright}
		$\square$
	\end{flushright}
	
	\noindent For a positive integer $n \geq 1$, the \textit{dihedral group of order $2n$}, denoted by $D_n$, is defined as
	$$D_n=<a,b\, | a^n=e, \, b^2=e,\,  bab^{-1}=a^{-1}>.$$ 
	
	\begin{theorem}\cite{conrad2009dihedral}\label{Kconrad1}
		Every subgroup of $D_n$ is cyclic or dihedral. A complete listing of the subgroups is as follows:
		\begin{enumerate}
			\item $< a^r > $, where $r \mid n$, with index 2$ r$,
			\item $ <a^r, a^i b> $, where $r \mid n $ and $0 \leq i \leq r - 1$, with index $r$.
		\end{enumerate}
		Every subgroup of $D_n$ occurs exactly once in this listing.
	\end{theorem}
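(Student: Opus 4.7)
The plan is to split on whether a subgroup $H \le D_n$ contains a reflection (an element of the form $a^i b$) or not, using the fact that the rotations $\langle a\rangle$ form an index-$2$ (hence normal) cyclic subgroup of $D_n$.

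First I would handle the rotation-only case. If $H \cap \{a^i b : 0 \le i < n\} = \emptyset$, then $H \le \langle a\rangle$. Since $\langle a\rangle$ is cyclic of order $n$, every subgroup of it is of the form $\langle a^r\rangle$ with $r \mid n$, and $|\langle a^r\rangle| = n/r$, giving index $2n/(n/r) = 2r$. This gives exactly the subgroups in item (1), and uniqueness follows from the standard classification of subgroups of a finite cyclic group.

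Next I would treat the mixed case. Suppose $H$ contains at least one reflection. Set $K := H \cap \langle a\rangle$; by the previous step, $K = \langle a^r\rangle$ for a unique $r \mid n$. Because $H \not\le \langle a\rangle$, the subgroup $H$ has index $2$ in itself modulo $\langle a\rangle$, i.e., $|H| = 2|K| = 2n/r$, so $[D_n:H] = r$. Pick any reflection $a^j b \in H$; I would then show that the set of reflections in $H$ equals the coset $a^j b\cdot K = \{a^{j+rk}b : k \in \mathbb{Z}\}$ by using the multiplication rule $(a^s)(a^j b) = a^{s+j}b$ and the fact that the product of two reflections lies in $\langle a\rangle$, hence in $K$. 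Reducing $j$ modulo $r$ produces a unique $i$ with $0 \le i \le r-1$ such that $a^i b \in H$, and then $H = \langle a^r, a^i b\rangle$, which is a dihedral-type subgroup of order $2n/r$.

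Finally, I would verify the "exactly once" claim. For the cyclic subgroups this is immediate from the uniqueness of $r$ given $\langle a^r\rangle$. For the dihedral-type subgroups, suppose $\langle a^r, a^i b\rangle = \langle a^{r'}, a^{i'}b\rangle$ with $0 \le i < r$ and $0 \le i' < r'$. Intersecting with $\langle a\rangle$ forces $\langle a^r\rangle = \langle a^{r'}\rangle$, hence $r = r'$; then comparing the reflections gives $a^i b = a^{i'+rk}b$ for some $k$, and the range restriction forces $i = i'$. The main (and only slightly delicate) step is the coset description of the reflections in $H$, where one must carefully use normality of $\langle a\rangle$ and the relation $bab^{-1}=a^{-1}$ to ensure that the generating set $\{a^r, a^i b\}$ really recovers all of $H$ rather than merely a subgroup of it; everything else reduces to routine counting and the cyclic-group classification.
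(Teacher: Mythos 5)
This statement is quoted in the paper as a known result from Conrad's notes on dihedral groups and is not proved there, so there is no internal proof to compare against; your argument is essentially the standard one found in the cited source (intersect $H$ with the rotation subgroup $\langle a\rangle$, split on whether $H$ contains a reflection, and describe the reflections of $H$ as a single coset of $H\cap\langle a\rangle$). Your proposal is correct as written, including the two points that actually need care: that $\langle a^r, a^ib\rangle$ recovers all of $H$ because its rotations exhaust $H\cap\langle a\rangle$ and its reflections exhaust the coset $a^ib\langle a^r\rangle$, and that uniqueness follows by first recovering $r$ from $H\cap\langle a\rangle$ and then $i$ from the range restriction $0\le i\le r-1$.
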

	\begin{remark}
		\begin{enumerate}
			\item A subgroup of $D_n$ is said to be of \textbf{Type (1)} if it is cyclic as stated in (1) of Theorem \ref{Kconrad1}.
			\item A subgroup of $D_n$ is said to be of \textbf{Type (2)} if it is dihedral subgroup as stated in (2) of Theorem \ref{Kconrad1}.
		\end{enumerate}
	\end{remark}
	
	\begin{note}
		Since $\tilde{\Gamma}_\mathcal{H}(D_n)$ is empty for $n=1$, we exclude this case and consider $n \geq 2$ throughout the article.
	\end{note}

	\begin{lemma}
		$\tilde{\Gamma}_\mathcal{H}(D_n)$ is non empty and moreover, all the proper non-trivial subgroups constitute the vertex set.
	\end{lemma}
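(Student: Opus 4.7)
The plan is to invoke the subgroup classification in Theorem~\ref{Kconrad1} and, for every non-trivial proper subgroup $H$ of $D_n$, exhibit a second non-trivial proper subgroup $K$ with $H\cap K=\{e\}$; by definition this makes $H$ a vertex of $\tilde{\Gamma}_\mathcal{H}(D_n)$, so once this is done for every $H\in S$ the vertex set equals $S$. Non-emptiness then follows at once because Theorem~\ref{Kconrad1} guarantees $S\neq\emptyset$ whenever $n\geq 2$ (for instance $\langle b\rangle\in S$).

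First I split on the type of $H$. If $H$ is of Type~(1), so $H=\langle a^r\rangle$ with $r\mid n$ and $r<n$, then every element of $H$ is a rotation, and the natural witness is $K=\langle b\rangle=\{e,b\}$, a Type~(2) subgroup of order $2$. Since $b$ is a reflection, $b\notin H$, which forces $H\cap K=\{e\}$.

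If $H$ is of Type~(2), so $H=\langle a^r,a^ib\rangle$ with $r\mid n$, $1<r\leq n$, and $0\leq i\leq r-1$, I use the explicit description of $H$: the rotations in $H$ are $a^{kr}$ for $0\leq k<n/r$ and the reflections in $H$ are $a^{i+kr}b$ for $0\leq k<n/r$; equivalently, $a^jb\in H$ iff $j\equiv i\pmod r$. Because $r\geq 2$, there is some $j\in\{0,1,\dots,n-1\}$ with $j\not\equiv i\pmod r$, and the choice $K=\langle a^jb\rangle$ is a non-trivial proper subgroup of order $2$ satisfying $a^jb\notin H$, hence $H\cap K=\{e\}$.

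The only step that needs real attention, and which I regard as the main (mild) obstacle, is the Type~(2) analysis: one must verify from the defining relations of $D_n$ that the reflections of $\langle a^r,a^ib\rangle$ are exactly those indexed by the residue class $i\pmod r$. Once this is in hand, selecting any reflection outside that class gives the required disjoint companion, and all remaining claims reduce to bookkeeping on the list in Theorem~\ref{Kconrad1}.
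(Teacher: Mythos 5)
Your proof is correct and takes essentially the same route as the paper: for a Type (1) subgroup the witness is the order-two subgroup $\{e,b\}$, and for a proper Type (2) subgroup one picks a reflection $a^jb$ not in $H$ and takes the order-two subgroup it generates. Your extra verification that the reflections of $\langle a^r,a^ib\rangle$ are exactly those with exponent $\equiv i\pmod r$ is just a more explicit justification of the paper's assertion that such a $j$ exists.
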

	\begin{proof}
		If $H_1$ is a non-trivial subgroup of $D_n$ of \textbf{Type (1)}, then choose  $ K_1 = <a^n, b> = \{e,b\} $. Hence, $H_1 \cap K_1 =\{e\}$ and therefore, $H_1 \, \in \,  V(\tilde{\Gamma}_\mathcal{H}(D_n))$.
		
		If $H_2$ is a proper subgroup of $D_n$ of \textbf{Type (2)}, then there exists  $j \in \{0,1,2,\cdots,n-1\}$ such that $a^j b \notin H_2$. For  $K_2 =<a^n, a^j b> = \{e,a^j b\}$, we have  $H_2 \cap K_2 =\{e\}$. Hence, $H_2 \, \in \, V(\tilde{\Gamma}_\mathcal{H}(D_n))$.
	\end{proof}
	In the following theorem, we established that  $\tilde{\Gamma}_\mathcal{H}(D_n)$ is connected and its diameter is atmost  3.
	
	\begin{theorem}\label{diameter}
		$\tilde{\Gamma}_\mathcal{H}(D_n)$ is connected  and $diam(\tilde{\Gamma}_\mathcal{H}(D_n)$) $\leq 3 $.
	\end{theorem}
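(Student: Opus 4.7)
The plan is to prove the bound $diam(\tilde{\Gamma}_\mathcal{H}(D_n)) \leq 3$ (which will subsume connectedness) by exhibiting, for any two distinct vertices $H_1, H_2$, an explicit path of length at most three. A preliminary observation simplifies matters: two vertices $u, v$ share a hyperedge if and only if $u \cap v = \{e\}$. The ``only if'' direction is the defining property of a hyperedge; the ``if'' direction follows because any trivially-intersecting pair of subgroups can be enlarged to a maximal such family, i.e., a hyperedge, by finiteness. Consequently, distances in the hypergraph coincide with distances in the underlying ``trivial-intersection'' graph on $V(\tilde{\Gamma}_\mathcal{H}(D_n))$, and the problem reduces to a graph-theoretic routing argument.

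The core of the argument is to route through the reflection subgroups $R_j := \langle a^j b \rangle = \{e, a^j b\}$, $0 \leq j \leq n-1$. I would first establish the key claim that every vertex $H$ of $\tilde{\Gamma}_\mathcal{H}(D_n)$ is adjacent to some $R_j$. If $H$ is of \textbf{Type (1)}, then $H \subseteq \langle a \rangle$ contains no reflection, so every $R_j$ satisfies $H \cap R_j = \{e\}$. If $H = \langle a^r, a^i b \rangle$ is of \textbf{Type (2)}, then properness of $H$ forces $r \geq 2$, so the $n/r < n$ reflections inside $H$ do not exhaust the $n$ reflections of $D_n$; any $j$ with $a^j b \notin H$ then gives $H \cap R_j = \{e\}$. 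Combined with the obvious observation that any two distinct $R_{j_1}, R_{j_2}$ have trivial intersection, this supplies both ingredients needed for the routing.

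The case analysis is then quick. If $H_1 \cap H_2 = \{e\}$, the distance is $1$. Otherwise, apply the key claim to pick $R_{j_1}$ adjacent to $H_1$ and $R_{j_2}$ adjacent to $H_2$. If $R_{j_1} = R_{j_2}$, we obtain a length-$2$ path $H_1 - R_{j_1} - H_2$; otherwise $R_{j_1} \cap R_{j_2} = \{e\}$ yields the length-$3$ path $H_1 - R_{j_1} - R_{j_2} - H_2$. The step that genuinely needs care is checking that each $R_{j_k}$ is distinct from both $H_1$ and $H_2$, so that the claimed path does not collapse or repeat a vertex. The identity $R_{j_k} \cap H_k = \{e\}$ rules out $R_{j_k} = H_k$, while $R_{j_1} = H_2$ would force $H_1 \cap H_2 \subseteq H_1 \cap R_{j_1} = \{e\}$, contradicting the standing assumption $H_1 \cap H_2 \neq \{e\}$. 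This verification of distinctness, together with establishing the key claim about reflection neighbors, is the main (though ultimately minor) obstacle; the rest is bookkeeping.
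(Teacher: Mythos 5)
Your proposal is correct, and it proves the stated bound by a cleaner route than the paper, although the underlying mechanism is the same: both arguments use the order-two reflection subgroups $\langle a^jb\rangle=\{e,a^jb\}$ as routing hubs. The paper proceeds by an explicit case analysis on the types of $H_1$ and $H_2$ (both cyclic, mixed, both dihedral, with further subcases on the indices $r_1,r_2$), choosing in each case a specific intermediate subgroup --- sometimes $\{e,b\}$, sometimes a rotation subgroup $\langle a^{r_0}\rangle$, sometimes a reflection avoiding both $H_1$ and $H_2$, and only in the worst subcase two reflections $K_1,K_2$. Your proof compresses all of this into a single lemma (every vertex meets some $\langle a^jb\rangle$ trivially, and any two distinct such subgroups meet trivially), which immediately yields a path of length at most $3$ and handles the vertex-distinctness issue correctly. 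What your streamlining gives up is the finer information the paper extracts along the way: the case analysis shows that the distance is exactly $2$ except in one specific configuration, and this is what feeds Corollary \ref{diametercoro} (the exact diameter values $1$, $2$, $3$ according to $n$) and Corollary \ref{diametercoro2}. For the theorem as literally stated, your argument is complete and arguably preferable; for the paper's subsequent use of the proof, the case analysis is doing extra work that your version would have to redo.
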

	\begin{proof}
		Let $H_1, H_2$ be two distinct vertices of 	$\tilde{\Gamma}_\mathcal{H}(D_n)$. We prove that there exists a path from $H_1$ to $H_2$ and  $d(H_1, H_2) \leq 3$. \\
		If $H_1 \cap H_2 = \{e\}$, then there exists a hyperedge $e_1$ containing $H_1$ and $ H_2$. Thus, $H_1e_1H_2$ is a shortest path from $H_1 \, \text{to} \, H_2$ and  $d(H_1, H_2) =1$. 
		\begin{figure}[h]
			\centering
			\begin{tikzpicture}[scale=.7]
				
				% Draw ellipses with clear borders
				\draw[thick] (0,0) ellipse (2 and .5);

				% Add labels for ellipses
				\node at (-1.75, .5) {$e_1$};

				% Add nodes
				
				\filldraw (-0.5, 0cm) circle (2pt) node[left] {${H_1}$};
				\filldraw (0.5, 0cm) circle (2pt) node[right] {${H_2}$};

			\end{tikzpicture}

		\end{figure}

		\noindent If $H_1 \cap H_2 \neq  \{e\}$, we have the following cases:
		
		\noindent\textbf{Case 1. Both $H_1$ and $H_2$ are of Type (1).}

		\noindent Consider $H_3 = <a^n, b> = \{e,b\}$. Observe that $H_1 \cap H_3 = \{e\}  \, \text{and}\,  H_2 \cap H_3 =\{e\}$. Therefore, there exist two distinct hyperedges $e_1 \, \text{and}\, e_2$ such that $e_1$ contains $H_1 \, \text{and} \, H_3$, and $e_2$ contains $H_2 \, \text{and} \, H_3$. Thus, $H_1 e_1 H_3 e_2 H_2$ is a shortest path from $H_1$ to $H_2$ and  $d(H_1, H_2) =2$.
		\begin{figure}[h]
			\centering
			\begin{tikzpicture}[scale=.7]
				
				% Draw ellipses with clear borders
				\draw[thick] (-2,0) ellipse (2 and .6);
				
				\draw[thick] (-1,-1) ellipse (.6 and 2);
				
				% Add labels for ellipses
				\node at (-3, .7) {$e_1$};
				\node at (-.5, 1) {$e_2$};

				% Add nodes
				
				\filldraw (-2, 0cm) circle (2pt) node[left] {${H_1}$};
				\filldraw (-1, .29cm) circle (2pt) node[below] {${H_3}$};
				\filldraw (-1, -1.2cm) circle (2pt) node[below] {${H_2}$};
				
			\end{tikzpicture}

		\end{figure}
		
		\noindent\textbf{Case 2. One is of Type (1) and the other is of Type (2).}

		\noindent Choose $j \in \{0,1,2, \cdots, n-1\} $ such that $ a^jb \notin H_1 \cup H_2$. Consider $H_3 = <a^n,a^jb> = \{e, a^jb\}$. Observe that $H_1 \cap H_3 = \{e\}  \, \text{and}\,  H_2 \cap H_3 =\{e\}$. Therefore, there exist two distinct hyperedges $e_1 \, \text{and}\, e_2$ such that $e_1$ contains $H_1 \, \text{and} \, H_3$, and $e_2$ contains $H_2 \, \text{and} \, H_3$. Thus, $H_1 e_1 H_3 e_2 H_2$ is a shortest path from $H_1$ to $H_2$ and 
		$d(H_1, H_2) =2$.

		\noindent\textbf{Case 3. Both $H_1$ and $H_2$ are of Type (2).}
		
		Let $H_1 = <a^{r_1}, a^ib>$ and $H_2 = <a^{r_2}, a^jb>$, where $r_1 \neq 1, r_2 \neq 1, \,  r_1, r_2 \mid n  $ and $i,j$ are fixed positive integers such that $0 \leq i \leq r_1 - 1$ and $0 \leq j \leq r_2 -1$.
		
		\noindent Consider the following subcases:

		\noindent$\underline{\textbf{Subcase 3.1.}}$
		If $r_1 = r_2$,
		
		\noindent$\underline{\textbf{Subcase 3.1.(a)}}$	If there exists a positive integer $r_0$ such that $ r_0 < n, \, r_0 \mid n$ and  $<a^{r_0}> \cap$ \\  $<a^{r_1}>=\{e\}$, then consider $H_3 = <a^{r_0}>$. Hence,  $H_1 \cap H_3 = \{e\}  \, \text{and}\,  H_2 \cap H_3 =\{e\}$. Therefore, there exist two distinct hyperedges $e_1 \, \text{and}\, e_2$ such that $e_1$ contains $H_1 \, \text{and} \, H_3$, and $e_2$ contains $H_2 \, \text{and} \, H_3$. Thus, $H_1 e_1 H_3 e_2 H_2$ is a shortest path from $H_1$ to $H_2$ and $d(H_1, H_2) =2$.

		\noindent$\underline{\textbf{Subcase 3.1.(b)}}$ If there does not exists any $r_0$ such that $<a^{r_0}> \cap <a^{r_1}>=\{e\}$ , then consider the following:
		
		If there exists  $l \in \{0,1,2, \cdots, n-1\}$ such that $a^{l}b \notin H_1 \cup H_2$, then for $H_3 = <a^n, a^{l}b>=\{e,a^{l}b\}$, we have  $H_1 \cap H_3 = \{e\}  \, \text{and}\,  H_2 \cap H_3 =\{e\}$. Therefore, there exist two distinct hyperedges $e_1 \, \text{and}\, e_2$ such that $e_1$ contains $H_1 \, \text{and} \, H_3$, and $e_2$ contains $H_2 \, \text{and} \, H_3$. Thus, $H_1 e_1 H_3 e_2 H_2$ is a shortest path from $H_1$ to $H_2$ and $d(H_1, H_2) =2$.

		If such $l$ does not exist, then choose  $l_1$ and $l_2$ from the set  $ \{0,1,2, \cdots, n-1\}$  such that $a^{l_1}b \notin H_1$	but  $a^{l_1}b \in H_2$ and   $a^{l_2}b \notin H_2$ but $a^{l_2}b \in H_1$. Consider $K_1 = <a^n, a^{l_1}b>=\{e,a^{l_1}b\}$ and $K_2 =$ $ <a^n, a^{l_2}b>=\{e,a^{l_2}b\}$. Observe that $H_1 \cap K_1 = \{e\},  \, \, K_1 \cap K_2 =\{e\}$ and $  H_2 \cap K_2 =\{e\}$. Hence there exist distinct hyperedges $e_1, e_2, e_3$ such that $e_1$ contains $H_1 \, \text{and} \, K_1$, $e_2 \, \text{contains}\, K_1 \, \text{and} \, K_2$, and $e_3 \, \text{contains} \, H_2 \, \text{and}\, K_2$. Hence, $H_1e_1K_1e_2K_2e_3H_2$ is a shortest path from $H_1 \, \text{to}\, H_2$ and  $d(H_1, H_2) =3$.
		\begin{figure}[h]
			\centering
			\begin{tikzpicture}[scale=.7]
				
				% Draw ellipses with clear borders
				\draw[thick] (-2,0) ellipse (2 and .5);
				\draw[thick] (0,-2) ellipse (2 and .5);
				\draw[thick] (-1,-1) ellipse (.5 and 2);
				
				% Add labels for ellipses
				\node at (-2, .75) {$e_1$};
				\node at (-1, 1.3) {$e_2$};
				\node at (2, -1.5) {$e_3$};
				
				% Add nodes
				
				\filldraw (-2, 0cm) circle (2pt) node[left] {${H_1}$};
				\filldraw (1, -2.0cm) circle (2pt) node[left] {${H_2}$};
				\filldraw (-1, -1.8cm) circle (2pt) node[below] {${K_2}$};
				\filldraw (-1, .2cm) circle (2pt) node[below] {${K_1}$};

			\end{tikzpicture}
			
		\end{figure}

		\noindent$\underline{\textbf{Subcase 3.2.}}$ If  $r_1 \neq r_2$,

		Choose  $l_1 \in \{0,1,2, \cdots , n-1\}$  such that $l_1 \not\equiv i (mod \, r_1)$ and $l_1 \not\equiv j (mod \, r_2)$ and so $a^{l_1} b \notin  H_1 \cup H_2 $. For $H_3= <a^n, a^{l_1} b>=\{e,a^{l_1} b\}$, we have  $H_1 \cap H_3 = \{e\}  \, \text{and}\,  H_2 \cap H_3 =\{e\}$. Therefore, there exist two distinct hyperedges $e_1 \, \text{and}\, e_2$ such that $e_1$ contains $H_1 \, \text{and} \, H_3$, and $e_2$ contains $H_2 \, \text{and} \, H_3$. Thus, $H_1 e_1 H_3 e_2 H_2$ is a shortest path from $H_1$ to $H_2$ and $d(H_1, H_2) =2$.
		
	\end{proof}
	
	\begin{corollary}\label{diametercoro}
		The diameter, $diam(\tilde{\Gamma}_\mathcal{H}(D_n)) = \begin{cases*}
			1, \, \text{$n$ is a prime},\\
			3, \, n=4k \,\,  \text{where $k$ is a positive integer},\\
			2, \, \text{otherwise}.
		\end{cases*}$
	\end{corollary}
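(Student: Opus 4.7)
The plan is to combine the upper bound $\mathrm{diam}(\tilde{\Gamma}_\mathcal{H}(D_n)) \leq 3$ supplied by Theorem~\ref{diameter} with a case-by-case determination of the exact diameter according to the arithmetic of $n$. For $n$ prime, Theorem~\ref{Kconrad1} leaves only two kinds of proper nontrivial subgroups of $D_n$: the cyclic subgroup $\langle a\rangle$ and the order-two reflection subgroups $\{e, a^i b\}$ for $0 \leq i \leq n-1$. Any two of these meet only at the identity, so the maximality condition in the definition of a hyperedge places every vertex inside one common hyperedge, giving $d(H_1,H_2) = 1$ for all pairs.

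For $n = 4k$ I would exhibit a pair at distance exactly $3$, namely $H_1 = \langle a^2, b\rangle$ and $H_2 = \langle a^2, ab\rangle$. Since $H_1 \cap H_2 \supseteq \langle a^2\rangle \neq \{e\}$, these vertices are not adjacent, so the task reduces to ruling out every candidate intermediate vertex $H_3$. Any such $H_3$ contains some $\langle a^r\rangle$ with $r \mid n$, and the requirement $H_1 \cap H_3 = \{e\}$ forces $\langle a^r\rangle \cap \langle a^2\rangle = \{e\}$, equivalently $\mathrm{lcm}(r,2) = n$. A parity check shows that the only possibilities with $r \mid n$ are $r = n$ (which is excluded) or $r = n/2$ with $n/2$ odd, which is impossible when $4 \mid n$. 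The only candidates left are the reflection subgroups $\{e, a^l b\}$, and these fail because avoiding $H_1$ forces $l$ odd while avoiding $H_2$ forces $l$ even. Hence $d(H_1,H_2) = 3$, and Theorem~\ref{diameter} completes the case.

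For all remaining $n$ (composite with $4 \nmid n$), I would show the diameter equals $2$. The lower bound is immediate: for any divisor $r$ of $n$ with $1 < r < n$, the subgroups $\langle a\rangle$ and $\langle a^r\rangle$ share $\langle a^r\rangle \neq \{e\}$ and hence lie in no common hyperedge. For the upper bound I would revisit the proof of Theorem~\ref{diameter} and note that distance $3$ appears only in the final sub-subcase of Subcase~3.1.(b), where $r_1 = r_2$ and the reflections of $H_1 \cup H_2$ exhaust all reflections of $D_n$. Counting reflections ($n/r_1$ per $H_i$) forces $2n/r_1 \geq n$, so $r_1 = 2$ and $n$ is even; writing $n = 2m$ with $m$ odd and $m \geq 3$, the cyclic subgroup $\langle a^m\rangle$ then serves as a length-$2$ bridge because $\mathrm{lcm}(m,2) = 2m = n$ gives $\langle a^m\rangle \cap \langle a^2\rangle = \{e\}$, which places $\langle a^m\rangle$ in Subcase~3.1.(a) instead. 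When $n$ is odd composite, $r_1 = 2$ is excluded altogether, so the distance-$3$ configuration never arises. The main obstacle is the parity-based $\mathrm{lcm}$ analysis that cleanly isolates $4 \mid n$ as the precise source of distance-$3$ pairs; everything else falls out of the classification of subgroups in Theorem~\ref{Kconrad1} together with the case structure already laid out in Theorem~\ref{diameter}.
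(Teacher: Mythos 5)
Your proposal is correct and follows the same route the paper implicitly relies on: reading the exact diameter off the case analysis in Theorem~\ref{diameter} (the paper states the corollary without a separate proof). In fact your write-up is more complete than the paper's, since you supply the lower-bound details it omits — the verification that $\langle a^2,b\rangle$ and $\langle a^2,ab\rangle$ admit no common neighbour when $4\mid n$, and the reflection-counting argument showing the distance-$3$ configuration of Subcase~3.1.(b) forces $r_1=2$ and $4\mid n$.
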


	\begin{corollary}\label{diametercoro2}
		The chromatic index, $\chi^{'}(\tilde{\Gamma}_\mathcal{H}(D_n))= \begin{cases}
			m-1, \, n=4k \,\,  \text{where $k$ is a positive integer},\\
			m, \, \text{otherwise}.
		\end{cases}$\\
		where $m$ denotes the number of hyperedges of $\tilde{\Gamma}_\mathcal{H}(D_n)$.
	\end{corollary}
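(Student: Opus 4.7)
I would first reduce the chromatic index to a standard graph coloring problem via the \emph{line graph} $L = L(\tilde{\Gamma}_\mathcal{H}(D_n))$, whose vertex set is the set of hyperedges and in which two hyperedges are adjacent if and only if they share a vertex of $\tilde{\Gamma}_\mathcal{H}(D_n)$. Then $\chi'(\tilde{\Gamma}_\mathcal{H}(D_n)) = \chi(L)$. Since $L$ has $m$ vertices, $\chi(L) \le m$, with equality when $L \cong K_m$. Moreover, when $L$ equals $K_m$ minus exactly one edge, one has $\chi(L) = m-1$: the two endpoints of the missing edge can share a colour while the remaining $m-2$ vertices each require a distinct colour. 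It therefore suffices to show: (a) for $n \neq 4k$, $L \cong K_m$, i.e., every two hyperedges share a vertex; (b) for $n = 4k$, $L$ equals $K_m$ minus exactly one edge.

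For the existence part of (b), I would focus on the two index-$2$ type-$(2)$ subgroups $H = \langle a^2, b\rangle$ and $H' = \langle a^2, ab\rangle$. Since $H \cap H' = \langle a^2\rangle \neq \{e\}$, they lie in distinct hyperedges $e^{*}, e^{**}$. Any common vertex $K \in e^{*} \cap e^{**}$ would need $K \cap H = K \cap H' = \{e\}$, so $K \subseteq \{e\} \cup \{a^j : j \text{ odd}\}$. Every non-trivial cyclic subgroup $\langle a^d \rangle$ contains $a^{2d}$, an even power of $a$; this lies in the allowed set only if $a^{2d} = e$, i.e., $d = n/2$. But for $n = 4k$ we have $n/2 = 2k$ even, so $a^{n/2}$ is itself an even power and is excluded. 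Every type-$(2)$ subgroup contains some reflection $a^i b$, which is never in the allowed set. Hence $e^{*} \cap e^{**} = \varnothing$.

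For (a) and for the uniqueness in (b), I plan a case analysis parallel to that of Theorem~\ref{diameter}, producing for each pair $(e, e')$ of hyperedges an explicit common vertex. The natural candidates are a reflection singleton $\{e, a^i b\}$ whose non-identity element is missed by every member of $e \cup e'$, or a small cyclic subgroup such as $\langle a^{n/2} \rangle$ (of order $2$, available whenever $n$ is even) or $\langle a^{r_0} \rangle$ with $r_0 \mid n$ chosen to have trivial intersection with the type-$(1)$ parts of $e \cup e'$. For $n \neq 4k$ this search succeeds for every pair --- crucially, when $n = 2m$ with $m$ odd, the subgroup $\langle a^m \rangle$ of order $2$ lies inside $\{e\} \cup \{a^j : j \text{ odd}\}$ and rescues exactly the pair that fails for $n = 4k$. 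For $n = 4k$ the same search succeeds at every pair except $(e^{*}, e^{**})$. The main obstacle is this uniqueness bookkeeping: one must verify that every hyperedge distinct from $e^{*}, e^{**}$ contains reflection singletons from both parity classes (or a suitable bridging cyclic subgroup), ensuring a common vertex with any other non-exceptional hyperedge. Combining (a) and (b) yields the claimed values of $\chi'(\tilde{\Gamma}_\mathcal{H}(D_n))$.
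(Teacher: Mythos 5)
Your proposal is correct and follows essentially the same route as the paper: both arguments amount to showing that the line graph of $\tilde{\Gamma}_\mathcal{H}(D_n)$ is $K_m$ when $n\neq 4k$ and $K_m$ minus exactly one edge when $n=4k$, with the exceptional non-incident pair being the hyperedges of $\langle a^2,b\rangle$ and $\langle a^2,ab\rangle$. The only difference is presentational: the paper extracts the incidence facts from Theorem \ref{diameter} and Corollary \ref{diametercoro}, whereas you re-derive the disjointness of the exceptional pair directly (and correctly note that $\langle a^{n/2}\rangle$ rescues this pair when $n\equiv 2 \pmod 4$), leaving the remaining incidence bookkeeping as a sketch, much as the paper does.
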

	
	\begin{proof}
		Let $e_1,e_2$ be two hyperedges of $\tilde{\Gamma}_\mathcal{H}(D_n)$. If $dist(e_1,e_2)$ is the minimum number of hyperedges in a sequence of incident hyperedges connecting $e_1$ and $e_2$, then $$diam(\tilde{\Gamma}_\mathcal{H}(D_n)) = max_{e_1,e_2 \in E} \, dist(e_1,e_2),$$ where $E$ is the hyperedge set of $\tilde{\Gamma}_\mathcal{H}(D_n).$
		Therefore, from the Corollary \ref{diametercoro}, 
		$$max_{e_1,e_2 \in E} \, dist(e_1,e_2) = \begin{cases*}
			1, \, \text{n is a prime},\\
			3, \, n=4k \,\,  \text{where $k$ is a positive integer},\\
			2, \, \text{otherwise}.
		\end{cases*}$$
		If $max_{e_1,e_2 \in E} \, dist(e_1,e_2) \leq 2$, then any two hyperedges in the hypergraph are incident. If  $n\neq 4k,$ then, by  Theorem \ref{diameter}, all the hyperedges of $\tilde{\Gamma}_\mathcal{H}(D_n)$ are incident. Hence,  $\chi^{'}(\tilde{\Gamma}_\mathcal{H}(D_n))=m$. Now, if $n= 4  k$ where $k$ is a positive integer, by the proof of Theorem \ref{diameter}, the hyperedge containing $<a^2,b>$ and the hyperedge  containing $<a^2,ab>$ are not incident and they are the only two non-incident hyperedges of the hypergraph $\tilde{\Gamma}_\mathcal{H}(D_n)$. Hence, $\chi^{'}(\tilde{\Gamma}_\mathcal{H}(D_n))=m-1$.

	\end{proof}
	
	In the following theorem, we have provided the necessary and sufficient conditions under which $\tilde{\Gamma}_\mathcal{H}(D_n)$ is a star hypergraph.
	\begin{theorem}\label{single hyperedge}
		For hypergraph  $\tilde{\Gamma}_\mathcal{H}(D_n)$ of $D_n$, the  following are equivalent:
		\begin{enumerate}
			\item $n$ is  prime.
			\item $\tilde{\Gamma}_\mathcal{H}(D_n)$ has exactly one hyperedge. 
			\item $\tilde{\Gamma}_\mathcal{H}(D_n)$ is  star.
			
		\end{enumerate}
	\end{theorem}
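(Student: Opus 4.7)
The plan is to establish the cyclic chain $(1) \Rightarrow (2) \Rightarrow (3) \Rightarrow (1)$, since the first two implications are short and the real content lies in the last.

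For $(1) \Rightarrow (2)$, I would appeal to Theorem \ref{Kconrad1}: when $n$ is prime the only divisors are $1$ and $n$, so the non-trivial proper subgroups of $D_n$ are exactly $\langle a \rangle$ together with the $n$ reflection subgroups $\{e, a^i b\}$ for $0 \leq i \leq n-1$. A direct check shows these pairwise intersect in $\{e\}$ (no $a^i b$ lies in $\langle a \rangle$, and two distinct reflection subgroups clearly share only the identity), so the entire vertex set already satisfies the trivial-intersection condition, and by maximality it is the unique hyperedge. The implication $(2) \Rightarrow (3)$ is essentially definitional: every vertex lies in at least one hyperedge (the $K$ witnessing its membership in the vertex set can be extended with $H$ to a maximal trivially-intersecting family), so if there is only one hyperedge then every vertex lies in it and serves as a star center.

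For $(3) \Rightarrow (1)$, I would argue contrapositively. Assuming $n$ is composite, write $n = pm$ with $p$ prime and $m \geq 2$, and suppose for contradiction that $\tilde{\Gamma}_\mathcal{H}(D_n)$ has a star center $H$. Since $\langle a \rangle$ is a vertex sharing a hyperedge with $H$, I obtain $H \cap \langle a \rangle = \{e\}$; using Theorem \ref{Kconrad1} this rules out every Type (1) subgroup and forces any Type (2) representation $\langle a^r, a^i b \rangle$ of $H$ to have $a^r = e$, i.e.\ $r = n$. Hence $H = \{e, a^i b\}$ for some fixed $i$. The second half of the argument is to produce, for every $k \in \{0, 1, \ldots, p-1\}$, the Type (2) vertex $\langle a^p, a^k b \rangle$ (which is proper and non-trivial, and has trivial-intersection witness $\{e, a^l b\}$ for any $l \not\equiv k \pmod p$); the star condition then gives $a^i b \notin \langle a^p, a^k b \rangle$, i.e.\ $i \not\equiv k \pmod p$. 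As $k$ runs through all residues modulo $p$, no value of $i$ survives, producing the contradiction.

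The main obstacle is the last implication. One has first to pin down the shape of any possible star center, exploiting trivial intersection with the large cyclic subgroup $\langle a \rangle$ to collapse $H$ all the way down to a reflection subgroup $\{e, a^i b\}$. Then one must exhibit a whole family of $p$ carefully chosen Type (2) vertices whose incidence with a common hyperedge containing $H$ jointly excludes every residue class of $i$ modulo $p$; this is the step where the hypothesis that $n$ is composite is used decisively, since the family has size $p$ and the divisibility $p \mid n$ is what makes each $\langle a^p, a^k b \rangle$ an honest subgroup of $D_n$.
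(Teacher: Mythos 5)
Your proposal is correct and follows essentially the same route as the paper: the same cycle of implications $(1)\Rightarrow(2)\Rightarrow(3)\Rightarrow(1)$, and for the hard direction the same decisive witness $\langle a^{p}, a^{l}b\rangle$ with $l\equiv i \pmod{p}$ against a reflection subgroup $\{e,a^{i}b\}$ (your ``run over all residues $k$'' is the paper's choice of the one offending residue in disguise), though you streamline the paper's eight-subcase analysis by first using $\langle a\rangle$ to force any star center to be a reflection subgroup. The only nick is that ``$\langle a\rangle$ shares a hyperedge with $H$, hence $H\cap\langle a\rangle=\{e\}$'' presupposes $H\neq\langle a\rangle$, so you should dispose of the candidate $H=\langle a\rangle$ separately (e.g.\ via the vertex $\langle a^{p}\rangle$, which lies in some hyperedge that cannot contain $\langle a\rangle$); this is a one-line repair, not a flaw in the approach.
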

	\begin{proof}
		($1\Rightarrow2$).	Suppose that $n=p$, a prime number. Then, $ <a> $ is the only non-trivial subgroup of $D_n$ of \textbf{Type (1)} and
		the non-trivial proper subgroup of $D_n$ of \textbf{Type (2)} are  $ \{ <a^ib> \, | \, 0 \leq i \leq p-1\}$.
		
		\noindent Observe that, $<a> \cap <a^ib> =\{e\}\,  \, \text{for} \,  \, 0 \leq i \leq p-1$ and,  $<a^ib> \cap <a^jb> =\{e\}$ for all distinct $ i,j \in\{0,1,2,\cdots,p-1\}$. Therefore,  $\tilde{\Gamma}_\mathcal{H}(D_n)$ has exactly one hyperedge.

		($2 \Rightarrow3$). 	Suppose that $\tilde{\Gamma}_\mathcal{H}(D_n)$ has exactly one hyperedge. If  $\tilde{\Gamma}_\mathcal{H}(D_n)$ is not a star, then for each vertex $H_0$ of  $\tilde{\Gamma}_\mathcal{H}(D_n)$, there exists a hyperedge that does not contain $H_0$  which is a contradiction. Therefore,  $\tilde{\Gamma}_\mathcal{H}(D_n)$ must be a star.
		
		($3 \Rightarrow1$). Suppose that $\tilde{\Gamma}_\mathcal{H}(D_n)$ is a star. 
		If $n$ is not a prime, then we claim that for each vertex $H_0$ of $\tilde{\Gamma}_\mathcal{H}(D_n)$, there exists an another  vertex $H_1$ such that $H_0 \cap H_1 \neq \{e\}$.  \\ To prove the claim, consider the following cases:\\
		\noindent \textbf{ Case 1.} $n= p^\alpha$, for some prime $p$ and $\alpha \geq 2$.

		\noindent 	$\underline{\textbf{Subcase 1.1.}}$ Let $H_0$ be of \textbf{Type (1)}. Suppose $H_0 = <a^r>$, for fixed $r$ such that $r \mid n$ and $r\neq n $.
		
		\noindent 	$\underline{\textbf{Subcase 1.1.(a).}}$ If $r = 1$, then for $H_1 = <a^p>$,  $H_0 \cap H_1 = <a^p> \neq \{e\}$.
		
		\noindent 	$\underline{\textbf{Subcase 1.1.(b).}}$ If $r \neq 1$, then for $H_1 = <a>$, $H_0 \cap H_1 = <a^r> \neq \{e\}$.\\
		\noindent 	$\underline{\textbf{Subcase 1.2.}}$ Let $H_0$ be of \textbf{Type (2)}. Suppose $H_0 = <a^r,a^ib>$, for fixed $r$ such that $r \neq 1, \, r \mid n $ and  for fixed $i$ such that $ 0\leq i \leq r-1$.
		
		\noindent 	$\underline{\textbf{Subcase 1.2.(a).}}$ If $r=n$, then $H_0=<a^n,a^ib>=\{e,a^ib\}$. Now, choose  $l \in \{0,1,2,..., p-1\}$ such that $l \equiv i (mod \,  p)$ and consider $H_1= <a^p,a^lb>$. Hence, $H_0 \cap H_1=<a^ib> \neq \{e\}$.
		
		\noindent 	$\underline{\textbf{Subcase 1.2.(b).}}$ If $r \neq n$, then choose $H_1 = <a^n,a^ib>=\{e, a^ib\}$ and hence  $H_0 \cap H_1=<a^ib> \neq \{e\}$. \\
		
		\vspace{.02cm}
		\noindent 	\textbf{Case 2.} $n$ has at least two distinct prime divisors, i.e., $n = p_1p_2 \prod_i p_i^{\alpha_i}$, where $p_1,p_2$ are distinct primes and $\alpha_i$'s are non-negative integers.
		
		\noindent $\underline{\textbf{Subcase 2.1.}}$ Let $H_0$ be of \textbf{Type (1)}. Suppose $H_0 = <a^r>$, for fixed $r$ such that $r \mid n$ and $r\neq n $.
		\noindent $\underline{\textbf{Subcase 2.1.(a)}}$  If $r =1$, then for $H_1 = <a^{p_1}>$, $H_0 \cap H_1 = <a^{p_1}> \neq \{e\}$. 
		
		\noindent$\underline{\textbf{Subcase 2.1.(b)}}$ If $r \neq 1$, then for $H_1 = <a>$, $H_0 \cap H_1 = <a^r> \neq \{e\}$. \\
		$\underline{\textbf{Subcase 2.2.}}$ Let $H_0$ be of \textbf{Type (2)}.  Suppose $H_0 = <a^r,a^ib>$, for fixed $r$ such that $r \neq 1, \, r \mid n $ and  for fixed $i$ such that $ 0\leq i \leq r-1$.

		\noindent $\underline{\textbf{Subcase 2.2(a)}}.$If $r=n$, then $H_0=<a^n,a^ib>=\{e,a^ib\}$. Choose $l \in \{0,1,2,..., {p_1}-1\}$ such that $l \equiv i (mod \, p_1)$ and choose the subgroup $H_1= <a^{p_1},a^lb>$.
		
		\noindent $\underline{\textbf{Subcase 2.2.(b).}}$ If $r \neq n$, then choose $H_1 = <a^n,a^ib>=\{e, a^ib\}$ and hence  $H_0 \cap H_1=<a^ib> \neq \{e\}$. 
		\vspace{.001cm}

		\noindent From all the cases above, we have proved that for each vertex $H_0$ of $\tilde{\Gamma}_\mathcal{H}(D_n)$, there exists an another  vertex $H_1$ such that $H_0 \cap H_1 \neq \{e\}$.  So, there exists a hyperedge not containing $H_0$ which is a contradiction.
	\end{proof}

	We need the following definitions and results to characterize  hypergraphs $\tilde{\Gamma}_\mathcal{H}(D_n)$ of $D_n$ that are hypertrees in terms of $n$.
	\begin{defn}\cite{voloshinhypergraphs}
		A \textit{host graph} for a hypergraph is a connected graph on the same vertex set such that
		every hyperedge induces a connected subgraph of the host graph. A hypergraph $\mathcal{H} = (X,\mathcal{D})$ is called a \textit{hypertree} if there exists a host tree
		$T = (X,E)$ such that each edge $D \in \mathcal{D}$  induces a subtree in $T$. 
	\end{defn}
	\begin{defn}\cite{voloshinhypergraphs}
		A hypergraph $\mathcal{H}$ has the \textit{Helly property (is Helly, for short)} if for every subfamily of its edges the following implication holds:\\ If every two edges of the subfamily have a  non-empty intersection, then the whole subfamily has a non-empty intersection.
	\end{defn}
	\begin{defn}\cite{voloshinhypergraphs}
		The \textit{line graph} of a hypergraph $\mathcal{H}=(X,\mathcal{D})$, denoted by $\mathcal{L}(\mathcal{H})$, is the graph with the set $\mathcal{D}$ as the vertex set and two vertices are adjacent iff the respective edges intersect.
	\end{defn}
	\begin{defn}\cite{voloshinhypergraphs}
		A hypergraph is called \textit{chordal} if every cycle of length $\geq 4$ has two non-consecutive vertices which are adjacent. 
	\end{defn}
	
	\begin{lemma}\cite{voloshinhypergraphs}\label{hellyhypertree}
		Every hypertree is a Helly hypergraph.
	\end{lemma}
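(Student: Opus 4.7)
The plan is to prove the classical fact that subtrees of a tree satisfy the Helly property. Let $T = (X,E)$ be a host tree witnessing that $\mathcal{H} = (X, \mathcal{D})$ is a hypertree, so each $D \in \mathcal{D}$ induces a subtree $T_D$ of $T$. Given a subfamily $\{D_1, \ldots, D_k\} \subseteq \mathcal{D}$ such that $D_i \cap D_j \neq \emptyset$ for all $i \neq j$, I must show $\bigcap_{i=1}^k D_i \neq \emptyset$. Since each $D_i$ is the vertex set of the subtree $T_{D_i}$, this reduces to the statement: pairwise intersecting subtrees of a tree have a common vertex.

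First I would record two preliminary facts about subtrees of a tree. (a) The intersection of two subtrees of $T$ is again a subtree (hence either empty or a connected subgraph): if $u, v$ lie in both $T_{D_i}$ and $T_{D_j}$, the unique $u$--$v$ path in $T$ lies in each, so in the intersection. (b) Three pairwise intersecting subtrees $T_{D_1}, T_{D_2}, T_{D_3}$ of $T$ have a common vertex. For (b), pick $v_{12} \in T_{D_1} \cap T_{D_2}$, $v_{13} \in T_{D_1} \cap T_{D_3}$, $v_{23} \in T_{D_2} \cap T_{D_3}$, and let $m$ be the median vertex of $\{v_{12}, v_{13}, v_{23}\}$ in $T$, i.e., the unique vertex lying on each of the three pairwise paths (which exists because any two paths in a tree meet in a subpath). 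Since $T_{D_1}$ is connected and contains $v_{12}$ and $v_{13}$, it contains the path between them, and in particular $m$; likewise for $T_{D_2}$ and $T_{D_3}$. Hence $m \in T_{D_1} \cap T_{D_2} \cap T_{D_3}$.

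Next I would prove the general case by induction on $k$. The cases $k = 1, 2$ are immediate. Suppose the result holds for any $k-1$ pairwise intersecting subtrees of a tree. Given $T_{D_1}, \ldots, T_{D_k}$ pairwise intersecting, set $T' := T_{D_{k-1}} \cap T_{D_k}$, which is a nonempty subtree by facts (a) and the hypothesis. I would then verify that the family $T_{D_1}, \ldots, T_{D_{k-2}}, T'$ of $k-1$ subtrees is still pairwise intersecting: the only nontrivial checks are that $T_{D_i} \cap T' \neq \emptyset$ for $i \leq k-2$, which follows from fact (b) applied to the triple $T_{D_i}, T_{D_{k-1}}, T_{D_k}$. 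The induction hypothesis then gives a common vertex of this reduced family, which is automatically a common vertex of $T_{D_1}, \ldots, T_{D_k}$, finishing the proof.

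The main obstacle is establishing the 3-subtree intersection property in fact (b); once the median argument is in hand, the rest is a routine induction. I would make sure the median vertex argument is written carefully, since it is the only place where the tree structure of $T$ (as opposed to mere connectivity) is used in an essential way.
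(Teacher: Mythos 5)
Your proof is correct. Note that the paper does not prove this lemma at all; it is quoted from Voloshin's book \cite{voloshinhypergraphs} and used as a black box, so there is no in-paper argument to compare against. What you have written is the standard textbook proof: the reduction to pairwise intersecting subtrees of the host tree, the median-vertex argument for three subtrees, and the induction that replaces two members of the family by their (again connected) intersection are all sound, and the one place where the tree structure is genuinely needed -- the existence and uniqueness of the median of three vertices -- is correctly identified and handled.
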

	\begin{lemma}\cite{voloshinhypergraphs}\label{hellychordal}
		A hypergraph $\mathcal{H}$ is a hypertree iff $\mathcal{H}$ is Helly and $\mathcal{L}(\mathcal{H})$ is chordal.
	\end{lemma}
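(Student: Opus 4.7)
The plan is to prove the two implications separately, using the classical Gavril characterization of chordal graphs as intersection graphs of subtrees of a tree as the main engine.

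For the forward direction, assume $\mathcal{H} = (X, \mathcal{D})$ is a hypertree with host tree $T = (X, E)$. The Helly property is immediate from Lemma \ref{hellyhypertree}, so it remains to show that $\mathcal{L}(\mathcal{H})$ is chordal. For each hyperedge $D \in \mathcal{D}$, the definition of host tree tells us that the induced subgraph of $T$ on vertex set $D$ is a connected subgraph of a tree, hence a subtree $T_D$. Moreover $D \cap D' \neq \emptyset$ if and only if $V(T_D) \cap V(T_{D'}) \neq \emptyset$, simply because $V(T_D) = D$. Therefore $\mathcal{L}(\mathcal{H})$ is precisely the intersection graph of the family $\{T_D : D \in \mathcal{D}\}$ of subtrees of the tree $T$, and Gavril's theorem yields chordality.

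For the reverse direction, assume $\mathcal{H}$ is Helly and $\mathcal{L}(\mathcal{H})$ is chordal. By the other direction of Gavril's theorem, there exists a tree $T^{*}$ (a clique tree of $\mathcal{L}(\mathcal{H})$) whose nodes are the maximal cliques of $\mathcal{L}(\mathcal{H})$, with the property that for every vertex $D$ of $\mathcal{L}(\mathcal{H})$ the set of maximal cliques containing $D$ induces a subtree of $T^{*}$. The task is to convert $T^{*}$ (whose nodes are hyperedges of $\mathcal{H}$, essentially) into a host tree $T$ on the vertex set $X$. For each $x \in X$, set $\mathcal{D}_x = \{D \in \mathcal{D} : x \in D\}$; any two elements of $\mathcal{D}_x$ share $x$, so $\mathcal{D}_x$ is a clique in $\mathcal{L}(\mathcal{H})$, and conversely the Helly property ensures that every clique of $\mathcal{L}(\mathcal{H})$ arises in this way, with a common vertex. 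This matching between vertices of $\mathcal{H}$ and cliques of $\mathcal{L}(\mathcal{H})$ allows one to place the vertices of $X$ consistently along the tree $T^{*}$ and, after a standard contraction and routing argument, produce a tree $T$ on $X$ in which each $D \in \mathcal{D}$ induces a subtree.

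The hard part is this reverse direction, in particular the passage from the clique tree $T^{*}$ to an actual host tree on $X$. Without the Helly hypothesis, nothing guarantees that the hyperedges through a common vertex correspond to a clique realised by a single element, and one can then fail to produce the required subtrees; it is precisely Helly that aligns the combinatorics of $\mathcal{L}(\mathcal{H})$ with the incidence structure of $\mathcal{H}$. The full construction is carried out in Voloshin's monograph, and I would follow that route, verifying by induction on $|\mathcal{D}|$ that the tree on $X$ obtained by this procedure is indeed a host tree for $\mathcal{H}$.
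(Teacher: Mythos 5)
First, a point of comparison: the paper offers no proof of this lemma at all --- it is quoted with a citation to Voloshin's monograph (it is the classical Duchet--Flament--Slater characterization of arboreal hypergraphs), so there is no in-paper argument to measure yours against. On its own merits, your forward direction is essentially complete and correct: each hyperedge of a hypertree is the vertex set of a subtree of the host tree, two hyperedges meet iff the corresponding subtrees meet, so $\mathcal{L}(\mathcal{H})$ is the intersection graph of a family of subtrees of a tree and Gavril's theorem gives chordality, while the Helly property is Lemma \ref{hellyhypertree}.

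The reverse direction, however, contains a genuine gap, located exactly where you say the hard part is. You correctly extract a clique tree $T^{*}$ of $\mathcal{L}(\mathcal{H})$ from chordality, and you correctly observe that by the Helly property every maximal clique of $\mathcal{L}(\mathcal{H})$, being a pairwise-intersecting family of hyperedges, has a common vertex. But the passage from $T^{*}$, a tree on the maximal cliques, to a host tree on $X$ is the entire content of the implication, and ``a standard contraction and routing argument'' followed by ``the full construction is carried out in Voloshin's monograph, and I would follow that route'' is a citation, not a proof; as written the argument is circular relative to the result being established. What needs to be supplied is roughly this: for each $x\in X$ the family $\mathcal{D}_x$ is a clique, hence lies in some maximal clique $Q(x)$; for each hyperedge $D$ the maximal cliques containing $D$ form a subtree $T^{*}_D$ of $T^{*}$, and since $D\in\mathcal{D}_x$ for every $x\in D$, each such $x$ is assigned to a node of $T^{*}_D$; by Helly each maximal clique $Q$ has a common vertex $x_Q$ (necessarily with $\mathcal{D}_{x_Q}=Q$, so distinct cliques get distinct representatives), and one builds $T$ by joining $x_Q x_{Q'}$ for adjacent nodes of $T^{*}$ and attaching every other $x$ as a leaf at $x_{Q(x)}$; one then checks that the representatives $x_Q$ for $Q\in T^{*}_D$ all lie in $D$ and span a connected subgraph, so each $D$ induces a subtree. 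None of these verifications appear in your write-up, and without them the reverse implication is only a plan.
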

	\begin{theorem}\label{hypertree}
		$\tilde{\Gamma}_\mathcal{H}(D_n)$ is a hypertree iff $n$ is a prime or $n = 4$.
	\end{theorem}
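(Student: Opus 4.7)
The plan is to apply Lemma \ref{hellychordal}, which characterizes hypertrees as precisely the Helly hypergraphs whose line graph is chordal. For the forward direction, if $n$ is prime then Theorem \ref{single hyperedge} shows $\tilde{\Gamma}_\mathcal{H}(D_n)$ has a single hyperedge, which is trivially Helly and whose line graph is a single vertex (hence chordal). For $n=4$, I would verify both conditions directly from Example \ref{d4eg}: the line graph of $\tilde{\Gamma}_\mathcal{H}(D_4)$ is $K_4$ with the edge $e_3 e_4$ removed, so its only $4$-cycle $e_3 e_1 e_4 e_2$ has the chord $e_1 e_2$ and the line graph is chordal; moreover the only three-edge subfamilies with all three pairs intersecting are $\{e_1, e_2, e_3\}$ and $\{e_1, e_2, e_4\}$, whose triple intersections are $\{H_3, H_5\}$ and $\{H_4, H_6\}$, both non-empty, so Helly holds.

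For the converse I would prove the contrapositive: if $n$ is composite and $n \neq 4$, then $\tilde{\Gamma}_\mathcal{H}(D_n)$ fails to be Helly. Let $p$ be the smallest prime divisor of $n$ and set $m = n/p$. Since $n$ is composite, $p \leq \sqrt{n}$, whence $m \geq p$; the equality $m = p = 2$ would force $n = 4$, so $m \geq 3$ in all remaining cases. For each $i \in \{0, 1, \ldots, m-1\}$, the Type (2) subgroup $T_i = \langle a^m, a^i b\rangle$ contains $\langle a^m\rangle \neq \{e\}$, and the set $\{T_i\} \cup \{\langle a^j b\rangle : 0 \leq j \leq n-1,\ j \not\equiv i \pmod m\}$ is pairwise trivially intersecting; I extend it to a maximal pairwise trivially intersecting family $H_i$, which is a hyperedge that necessarily contains every one of the listed elements. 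Since $T_i \cap T_k \supseteq \langle a^m\rangle \neq \{e\}$ whenever $i \neq k$, no $T_i$ lies in $H_k$ for $k \neq i$, so the $H_i$ are pairwise distinct hyperedges. Finally, $\langle a\rangle$ trivially intersects exactly the order-$2$ subgroups $\langle a^j b\rangle$, and $E_\alpha := \{\langle a\rangle\} \cup \{\langle a^j b\rangle : 0 \leq j \leq n-1\}$ is already a maximal pairwise trivially intersecting family, hence the unique hyperedge containing $\langle a\rangle$; it is clearly distinct from every $H_i$.

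I would then show that $\{H_0, \ldots, H_{m-1}, E_\alpha\}$ witnesses a Helly violation. Pairwise, $H_i \cap H_j$ contains every $\langle a^k b\rangle$ with $k$ outside both residue classes $i, j$ modulo $m$ (non-empty because $m \geq 3$), and $H_i \cap E_\alpha$ contains every $\langle a^k b\rangle$ with $k \not\equiv i \pmod m$. For the overall intersection: no reflection $\langle a^k b\rangle$ can satisfy $k \not\equiv i \pmod m$ for every $i$, so none survives; $\langle a\rangle \cap T_i = \langle a^m\rangle \neq \{e\}$ prevents $\langle a\rangle$ from lying in any $H_i$; each $T_j$ lies only in $H_j$; and any further Type (1) subgroup $\langle a^s\rangle$ with $s \neq 1$, or Type (2) subgroup of order greater than $2$, that the maximal extension may have placed in some $H_i$, is by the explicit form of $E_\alpha$ absent from $E_\alpha$. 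Hence the overall intersection is empty, Helly fails, and Lemma \ref{hellychordal} shows $\tilde{\Gamma}_\mathcal{H}(D_n)$ is not a hypertree. The main obstacle will be this last bookkeeping step, namely ensuring that no ``extra'' member of the maximal extensions $H_i$ can lie in all $m+1$ chosen hyperedges; this is what the tight description of $E_\alpha$ (only $\langle a\rangle$ and the $n$ order-$2$ reflection subgroups) resolves uniformly.
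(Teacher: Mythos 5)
Your proof is correct, and the two easy directions ($n$ prime, $n=4$) coincide with the paper's treatment (the paper even uses the same line graph $K_4$ minus the edge $e_3e_4$; your explicit check of the triple intersections $\{H_3,H_5\}$ and $\{H_4,H_6\}$ makes rigorous what the paper dismisses with ``clearly Helly''). For the hard direction both arguments reduce to violating the Helly property via Lemma \ref{hellyhypertree}/\ref{hellychordal}, but the executions differ genuinely. The paper splits on whether $4\mid n$: when $4\nmid n$ it invokes Theorem \ref{diameter} to say \emph{all} hyperedges pairwise meet and Theorem \ref{single hyperedge} to say no vertex is universal; when $n=4k$, $k\geq 2$, it discards the one hyperedge containing $\langle a^2,ab\rangle$ (relying on the claim from Corollary \ref{diametercoro2} that this is one of only two mutually non-incident hyperedges) and then runs a four-subcase analysis to show the remaining family has empty total intersection. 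You instead build a single uniform witness: with $p$ the smallest prime divisor and $m=n/p\geq 3$, the $m+1$ hyperedges $H_0,\dots,H_{m-1},E_\alpha$ pairwise intersect in reflections but have empty common intersection, the key device being that $E_\alpha$ is \emph{exactly} $\{\langle a\rangle\}\cup\{\langle a^jb\rangle\}$, which confines any candidate common vertex to a set you can exhaust by hand and so disposes of the unknown ``extra'' vertices in the maximal extensions $H_i$. Your route buys a case-free argument that does not lean on the diameter and chromatic-index results (whose proofs carry the burden of the incidence claims); the paper's route buys a shorter write-up at the cost of importing those earlier theorems and a residual case split. One cosmetic caveat: your hyperedge labels $H_i$ collide with the paper's convention of using $H_i$ for subgroups, so relabel them (say $\mathcal{E}_i$) before splicing into the text.
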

	\begin{proof}
		Suppose that $\tilde{\Gamma}_\mathcal{H}(D_n)$ is a hypertree. If  $n$ is neither a prime nor $n=4$, then consider the following cases:

		\noindent\textbf{Case 1.} $n = 4k \,\,  \text{for some positive integer $k \geq 2$}$.
		
		Consider $\mathcal{S}$ as the set of all hyperedges of  $\tilde{\Gamma}_\mathcal{H}(D_n)$ except the hyperedge $e_1$, where $e_1$  contains $<a^2,ab>$. Then, from the proof of Corollary \ref{diametercoro2}, $\mathcal{S}$ is a subfamily of  hyperedge set of  $\tilde{\Gamma}_\mathcal{H}(D_n)$ such that any two hyperedges in $\mathcal{S}$ have a non-empty intersection. Now, we claim that no vertex belongs to all the hyperedges of $\mathcal{S}$, i.e., for each vertex $H_0$, there exists a hyperedge $e_2 \in \mathcal{S}$ such that $H_0 \notin e_2$ and so $\tilde{\Gamma}_\mathcal{H}(D_{n})$ does not satisfy the Helly property. Therefore, by Lemma \ref{hellyhypertree},  $\tilde{\Gamma}_\mathcal{H}(D_n)$ is not a hypertree which is a contradiction.\\
		To prove the claim, consider the following cases:\\
		\textbf{Subcase 1.1.} Let $H_0$ be of \textbf{Type (1)}. Suppose $H_0 = <a^r>$, for fixed $r$ such that $r \mid n$ and $r\neq n $.\\
		$\underline{Subcase \, 1.1.(a).}$ If $r=1$, then consider the hyperedge $ e_2 = \{<b>, <ab>, \cdots, <a^{2^\alpha - 1}b>, \\ <a^2>\}.$
		Clearly, $e_2 \in \mathcal{S}$. But $H_0 \, \cap <a^2> =<a^2> \neq \{e\}$ and therefore,  $H_0 \notin e_2$.\\
		$\underline{Subcase \, 1.1.(b).}$ If $r \neq 1$, then consider the hyperedge $ e_2 = \{<b>, <ab>, \cdots, <a^{2^\alpha - 1}b>, \\ <a>\}.$ 	Clearly, $e_2 \in \mathcal{S}$. But $H_0 \, \cap <a> = <a^r> \neq \{e\}$ and therefore,  $H_0 \notin e_2$.\\
		\textbf{Subcase 1.2.}  Let $H_0$ be of \textbf{Type (2)}. Suppose $H_0 = <a^r,a^ib>$, for fixed $r$ such that $r \neq 1, \, r \mid n $ and  for fixed $i$ such that $ 0\leq i \leq r-1$.\\
		$\underline{Subcase \, 1.2.(a).}$ If $r =n$, then consider $e_2$ be the hyperedge containing $<a^4,a^lb>$, where  $ l \equiv i (mod \, 4)$. Clearly,  $e_2 \in \mathcal{S}$ but $H_0 \, \cap <a^4,a^lb>=<a^ib> \neq \{e\}.$ Therefore, $ H_0 \notin e_2$.\\
		$\underline{Subcase \, 1.2.(b).}$ If $r\neq n$, then choose $e_2$ be a hyperedge containing $<a^ib>$ in $ \mathcal{S}$. Observe that $H_0\, \cap$ $ <a^ib>=<a^ib> \neq \{e\}$ and therefore  $H_0 \notin e_2$. 
		
		\noindent\textbf{Case 2.} $n \neq 4k$ for any $k \geq 2$.
		
		\noindent By  Theorem \ref{diameter}, any two hyperedges of $\tilde{\Gamma}_\mathcal{H}(D_n)$ have a non-empty intersection. But, by  Theorem \ref{single hyperedge}, no vertex belongs to all the hyperedges of $\tilde{\Gamma}_\mathcal{H}(D_n)$. Thus, $\tilde{\Gamma}_\mathcal{H}(D_n)$  does not satisfy Helly property and so by Lemma \ref{hellyhypertree}, $\tilde{\Gamma}_\mathcal{H}(D_n)$ is not a hypertree which is a contradiction.

		Conversely, first suppose that $n$ is prime. Then, by Theorem \ref{single hyperedge}, $\tilde{\Gamma}_\mathcal{H}(D_n)$ has a single hyperedge consists of $p+1$ vertices, say $H_1,H_2, \cdots, H_{p+1}$.  Figure \ref{hosttree} depicts a host tree of $\tilde{\Gamma}_\mathcal{H}(D_n)$ and therefore $\tilde{\Gamma}_\mathcal{H}(D_n)$ is a hypertree.
		\begin{figure}[h]
			\centering
			\begin{tikzpicture}[scale=1.5, every node/.style={circle, fill=black, minimum size=6pt, inner sep=0pt}]
				
				% Nodes in set U
				\node[label=above:$H_1$] (u1) at (0,3) {};
				\node[label=above:$H_2$] (u2) at (.5,3) {};
				\node[label=above:$H_3$] (u3) at (1,3) {};
				
				\node[label=above:$H_p$] (u6) at (1.6,3) {};
				\node[label=above:$H_{p+1}$] (u7) at (2.3,3) {};
				
				% Edges
				\draw (u1) -- (u2);
				\draw (u2) -- (u3);
				\draw (u6) -- (u7);
				\draw[dashed] (1,3) -- (1.6,3);

			\end{tikzpicture}
			\caption{Host tree of $\tilde{\Gamma}_\mathcal{H}(D_n)$, when $n$ is a prime. }
			\label{hosttree}
		\end{figure}
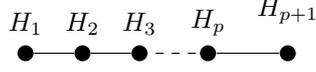
		
		Now, for $n=4$, we have the group $D_4$ as given in Example \ref{d4eg}. Clearly, $\tilde{\Gamma}_\mathcal{H}(D_4)$ is Helly. Now, the line graph $\mathcal{L}(\tilde{\Gamma}_\mathcal{H}(D_4))$ of $\tilde{\Gamma}_\mathcal{H}(D_4)$ as depicted in  Figure \ref{linegraph} is chordal. Therefore, by Lemma \ref{hellychordal},  $\tilde{\Gamma}_\mathcal{H}(D_4)$ is a hypertree.
		\begin{figure}[h]
			\centering
			\begin{tikzpicture}[scale=1.2, every node/.style={circle, fill=black, minimum size=6pt, inner sep=0pt}]
				
				% Nodes in set U
				\node[label=left:$e_1$] (u1) at (0,2) {};
				\node[label=left:$e_2$] (u2) at (0,1) {};
				% Nodes in set V
				\node[label=right:$e_3$] (v1) at (2,2) {};
				\node[label=right:$e_4$] (v2) at (2,1) {};
				
				% Edges
				\draw (u1) -- (u2);
				\draw (u1) -- (v1);
				\draw (u1) -- (v2);
				\draw (u2) -- (v1);
				\draw (u2) -- (v2);
				
			\end{tikzpicture}
			\caption{ $\mathcal{L}(\tilde{\Gamma}_\mathcal{H}(D_4))$}
			\label{linegraph}
		\end{figure}
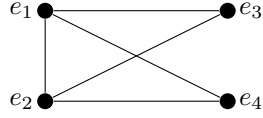
	\end{proof}
	
	In the following theorem, we find the girth of the hypergraph, $\tilde{\Gamma}_\mathcal{H}(D_n)$.
	\begin{theorem}
		$ gr(\tilde{\Gamma}_\mathcal{H}(D_n)) \in \{2, \infty\}$.
	\end{theorem}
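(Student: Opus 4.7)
The plan is to split by whether $n$ is prime. If $n$ is prime, then by Theorem~\ref{single hyperedge} the hypergraph $\tilde{\Gamma}_\mathcal{H}(D_n)$ has exactly one hyperedge; since a cycle requires at least two distinct hyperedges, no cycle exists and therefore $gr(\tilde{\Gamma}_\mathcal{H}(D_n)) = \infty$.

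For $n$ composite, I would produce two distinct hyperedges that share at least two vertices. Let $p$ be a prime divisor of $n$ with $p < n$, and define
$$\mathcal{S}_1 = \{\langle a\rangle\} \cup \{\langle a^n, a^ib\rangle : 0 \le i \le n-1\}.$$
First I would check pairwise triviality: $\langle a\rangle \cap \{e, a^ib\} = \{e\}$, and $\{e, a^ib\} \cap \{e, a^jb\} = \{e\}$ for $i \ne j$. Next I would argue maximality using Theorem~\ref{Kconrad1}: any subgroup $H$ of $D_n$ with $H \cap \langle a\rangle = \{e\}$ cannot contain any non-identity power of $a$, so $H$ must be a \textbf{Type (2)} subgroup $\langle a^r, a^jb\rangle$ with $r = n$, i.e.\ $H = \{e, a^jb\}$, already in $\mathcal{S}_1$. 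Hence $e_1 := \mathcal{S}_1$ is a hyperedge. Similarly,
$$\mathcal{S}_2 = \{\langle a^p\rangle\} \cup \{\langle a^n, a^ib\rangle : 0 \le i \le n-1\}$$
has pairwise trivial intersection (since $\langle a^p\rangle \cap \{e, a^ib\} = \{e\}$), so it extends to a maximal such family $e_2$.

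These two hyperedges are distinct: $\langle a\rangle \in e_1$, but $\langle a\rangle \cap \langle a^p\rangle = \langle a^p\rangle \ne \{e\}$ keeps $\langle a\rangle$ out of $e_2$. Yet $e_1 \cap e_2 \supseteq \{\langle a^n, a^ib\rangle : 0 \le i \le n-1\}$, which has $n \ge 4$ elements. Choosing any two distinct $\{e, a^ib\}, \{e, a^jb\}$ in this intersection yields the cycle $\{e,a^ib\}\, e_1\, \{e,a^jb\}\, e_2\, \{e,a^ib\}$ of length $2$, and since no non-trivial cycle can have length $1$, we conclude $gr(\tilde{\Gamma}_\mathcal{H}(D_n)) = 2$.

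The main obstacle is the maximality verification for $e_1$: one must invoke the classification in Theorem~\ref{Kconrad1} to show that the only subgroups of $D_n$ intersecting $\langle a \rangle$ trivially are the order-$2$ reflection subgroups $\{e, a^jb\}$. Once this is done, the distinctness of $e_1$ and $e_2$ and the fact that $|e_1 \cap e_2| \ge 2$ are immediate, and the case analysis by primality of $n$ covers all possibilities.
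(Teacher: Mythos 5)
Your proposal is correct and follows essentially the same route as the paper: the prime case is dispatched via the single-hyperedge theorem, and for composite $n$ both arguments produce two distinct hyperedges sharing the reflection subgroups $\{e,a^ib\}$, distinguished by $\langle a\rangle$ versus $\langle a^p\rangle$ (whose non-trivial mutual intersection forces the hyperedges apart). Your version is slightly more explicit in verifying that $\{\langle a\rangle\}\cup\{\{e,a^ib\}\}$ is itself maximal, and it avoids the paper's (unnecessary) split between $n=p^\alpha$ and $n$ with two prime divisors, but the underlying idea is identical.
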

	\begin{proof}
		Consider the following cases.
		
		\textbf{Case 1.} If $n = p$, a prime number, then by Theorem \ref{single hyperedge}, $\tilde{\Gamma}_\mathcal{H}(D_n)$ has exactly one hyperedge. Thus, the hypergraph does not contain a cycle and so
		$ gr(\tilde{\Gamma}_\mathcal{H}(D_n))=\infty$. 
		
		\textbf{Case 2.} If $n$ is not a prime, then  consider  $H_1 = \{e,b\} $ and $ H_2=\{e,ab\}$. Clearly, $H_1 \, \cap \, H_2 =\{e\}$. We claim that there exist two distinct hyperedges $e_1$ and $e_2$ such that both contains $H_1$ and $H_2$, and so $H_1e_1H_2e_2H_1$ is a shortest cycle of length 2. Hence, $  gr(\tilde{\Gamma}_\mathcal{H}(D_n))=2$. 
		\begin{figure}[h]
			\centering
			\begin{tikzpicture}[scale=.6]
				
				% Draw ellipses with clear borders
				\draw[thick] (-1.8,.8) ellipse (2 and 1);
				
				\draw[thick] (-1,0) ellipse (1 and 2);
				
				% Add labels for ellipses
				\node at (-3.2, 2) {$e_1$};
				\node at (-1, 2.3) {$e_2$};

				% Add nodes

				\filldraw (-1.5, 1.2cm) circle (2pt) node[below] {${H_1}$};
				\filldraw (-.7, 1.2cm) circle (2pt) node[below] {${H_2}$};
				\filldraw (-1, -1cm) circle (2pt) node[below] {${K_2}$};
				\filldraw (-2.5, 1.2cm) circle (2pt) node[below] {${K_1}$};
			\end{tikzpicture}
		\end{figure}\\
		Consider the following cases:
		
		\noindent $\underline{\text{Subcase}\,\, 2.1}\quad  \text{Suppose} \,n=p^\alpha, \, \text{for some prime $p$} \, \text{and} \, \alpha \geq 2$. Then, consider  $K_1 =<a^p>$ and \\ $K_2 =<a>$. Observe that $H_i \cap K_j =\{e\}$ for all $i,j \in \{1,2\}$ and $ K_1 \cap K_2 \neq \{e\}$. Therefore, there exist two distinct hyperedges $e_1, e_2$ such that $e_1$ contains $H_1, H_2\, \text{and} \, K_1$, and $e_2$ contains $H_1, H_2 \, \text{and} \, K_2$.\\ 
		
		\noindent $\underline{\text{Subcase}\,\, 2.2}\quad \text{Suppose} \, n$ contains atleast two distinct prime divisors. i.e.,  $n=p_1 p_2 \prod_{i} p_i^{\alpha_i}, $where $p_1,p_2$ are distinct primes and $\alpha_i$'s are non-negative integers.
		
		\noindent Consider $K_1 = <a^{p_1}> \, \text{and} \,  K_2 =<a>$. Observe that $H_i \cap K_j =\{e\}$ for all $i,j \in \{1,2\}$ and $ K_1 \cap K_2 \neq \{e\}$. Therefore, there exists two distinct hyperedges $e_1, e_2$ such that $e_1$ contains $H_1, H_2$ and $ K_1$, and $e_2$ contains $H_1, H_2, \, \text{and} \, K_2$.
	\end{proof}

	The following result discusses the chromatic number of $\tilde{\Gamma}_\mathcal{H}(D_n)$.
	\begin{theorem}
		The chromatic number,
		$\chi(\tilde{\Gamma}_\mathcal{H}(D_n)) = 2.$
	\end{theorem}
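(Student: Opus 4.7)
The plan is to establish $\chi(\tilde{\Gamma}_\mathcal{H}(D_n)) = 2$ by matching lower and upper bounds. For the lower bound, I would observe that every hyperedge contains at least two vertices: given any vertex $H$, the definition supplies a subgroup $K$ with $H \cap K = \{e\}$, so the maximal pairwise-trivially-intersecting family containing $H$ has size at least two, forcing $\chi \geq 2$.

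For the upper bound, I would exhibit an explicit proper $2$-coloring based on the reflection $b$. Define $c : V(\tilde{\Gamma}_\mathcal{H}(D_n)) \to \{A, B\}$ by $c(H) = A$ if $b \in H$ and $c(H) = B$ otherwise, and verify that every hyperedge contains vertices of both colors. That color $B$ appears in each hyperedge $e$ is immediate: if every $H \in e$ contained $b$, then any two distinct $H, K \in e$ would have $b \in H \cap K$, contradicting condition (1) of the hyperedge definition.

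The more interesting direction is showing that color $A$ also appears, and here the maximality clause in the definition is essential. The subgroup $<b> = \{e, b\}$ is a vertex since $<b> \cap <a> = \{e\}$. If $<b> \in e$ we are done. Otherwise, maximality of $e$ forces $e \cup \{<b>\}$ to violate condition (1), so some $H \in e$ must satisfy $<b> \cap H \neq \{e\}$; since $<b>$ has order two, this forces $b \in H$ and therefore $c(H) = A$. Combining the two directions shows the coloring is proper, hence $\chi \leq 2$, and with the lower bound the theorem follows.

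I do not foresee any genuine obstacle. The entire upper bound rests on the small observation that, for any fixed non-identity element $g \in D_n$, a maximal pairwise-trivially-intersecting family of subgroups must hit some subgroup containing $g$, since otherwise $<g>$ itself could be appended, violating maximality. Choosing $g = b$ is simply the most convenient because $<b>$ has order two, which converts the condition $<b> \cap H \neq \{e\}$ into the clean criterion $b \in H$.
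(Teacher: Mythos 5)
Your proof is correct, and it follows the same overall strategy as the paper's: exhibit an explicit two-coloring and use the maximality clause in the definition of a hyperedge to show that every hyperedge receives both colors. The difference is in the partition. You color by membership of the single reflection $b$, so one direction is immediate: two subgroups both containing $b$ cannot lie in a common hyperedge, and since every hyperedge has at least two vertices, some member omits $b$. The paper instead partitions the vertices into $A=\{\langle a^ib\rangle,\ \langle a^d,a^jb\rangle : i\neq 0,\ 1\le j\le d-1\}$ and $B=\{\langle a^d\rangle,\ \langle b\rangle,\ \langle a^d,b\rangle : d\mid n\}$, grouping the rotation subgroups $\langle a^d\rangle$ with the subgroups containing $b$; as a consequence, to rule out a hyperedge lying entirely in $B$ it must run a second maximality argument (produce a reflection $a^ib$ missed by every member of the hyperedge and append $\langle a^ib\rangle$). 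Your argument for the other direction --- either $\langle b\rangle$ itself lies in the hyperedge, or maximality forces some member to meet $\langle b\rangle$ nontrivially and hence to contain $b$ --- is exactly the argument the paper uses (implicitly, via its ``similarly'') to show a hyperedge cannot lie entirely in its set $A$. You also make the lower bound $\chi\ge 2$ explicit by checking that every hyperedge has at least two vertices, a point the paper leaves unstated. Both proofs are valid; yours is marginally more economical in the direction where the paper hunts for an uncovered reflection.
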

	\begin{proof}
		Divide the vertex set of $\tilde{\Gamma}_\mathcal{H}(D_n)$ into two sets $A \, \text{and} \, B$, where\\
		$A = \{<a^ib>, <a^d,a^jb> \, | \,   i \in \{1,2, \cdots, n-1\}, \, d \mid n, \,  \text{and} \, 1 \leq j \leq d-1\}$ and \\$ B = \{<a^d>, <b>, <a^d,b> \, | \,  d \mid n\}$.

		We claim that every hyperedge of $\tilde{\Gamma}_\mathcal{H}(D_n)$  has at least one vertex from  A as well as B.
		Let $e_1$ be a hyperedge of $\tilde{\Gamma}_\mathcal{H}(D_n)$. If $e_1$ contains only vertices from $B$, then we can find a vertex $<a^ib>$ in $A$ for some $i \in \{1,2, \cdots, n-1\}$ such that $<a^ib> \cap \,\,H =\{e\}$, for all  $H \in e_1$.
		Hence, the vertex $<a^ib>$ must belongs to $e_1$ and this contradicts the maximality of $e_1$. Similarly, the hyperedge cannot contain only vertices from the set $A$. Hence, each hyperedge has at least one vertex from both A and B. Assign the color $c_1$ to the vertices in $A$ and the color $c_2$ to the vertices in $B$. This is a proper coloring of $\tilde{\Gamma}_\mathcal{H}(D_n)$. Consequently,  
		$\chi(\tilde{\Gamma}_\mathcal{H}(D_n)) = 2.$
	\end{proof}
	
	\section{Planarity and Non-Planarity of  hypergraph}
	To discuss the planarity and orientable genus  of  $\tilde{\Gamma}_\mathcal{H}(D_n)$ on $D_n$, we need the following results.

	\begin{theorem}\cite{walsh1975hypermaps}
		A graph $G$ is planar iff it contains no subdivision of $K_5$ or $K_{3,3}$.
	\end{theorem}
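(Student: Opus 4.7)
The plan is to treat the two implications separately, with the forward direction being straightforward and the reverse direction requiring a minimal counterexample argument. For the forward direction, I would first observe that a planar embedding of $G$ restricts to a planar embedding of any subgraph, and that subdividing an edge of a plane graph yields a plane graph (one can simply place the new degree-two vertex at an interior point of the corresponding arc). So it suffices to show $K_5$ and $K_{3,3}$ are themselves non-planar, which follows from Euler's formula $v-e+f=2$: if $K_5$ were planar, then $f = 2 - 5 + 10 = 7$, but every face is bounded by at least $3$ edges, so $2e \geq 3f$ gives $10 \geq 21/2$, contradiction; an analogous argument for $K_{3,3}$ uses the fact that the graph is triangle-free, so every face has length at least $4$, giving $2e \geq 4f$ and the same kind of contradiction.

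For the reverse direction, I would take a minimal counterexample: a non-planar graph $G$ containing no subdivision of $K_5$ or $K_{3,3}$, with $|E(G)|$ minimum. The plan is to show $G$ cannot exist by extracting increasingly strong structural properties and finally contradicting them. First I would argue $G$ is connected and has minimum degree at least $3$: a vertex of degree $\leq 2$ can be deleted (or suppressed) without losing the Kuratowski-freeness and without gaining planarity. Next I would show $G$ must be $2$-connected: a cut vertex splits $G$ into proper subgraphs, each planar by minimality, and these can be reassembled into a planar drawing of $G$ by placing one in an outer face of the other. Then the more delicate step is to upgrade to $3$-connectivity: given a $2$-separation $\{u,v\}$, one adds the edge $uv$ to each side, applies minimality/planarity to each augmented piece, and glues the drawings along the $uv$-edge to obtain a planar drawing of $G$, unless the added edge already forced a Kuratowski subdivision in one of the sides, in which case that subdivision pulls back to $G$.

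Once $G$ is known to be $3$-connected, I would invoke (or prove) Tutte's theorem that every $3$-connected planar graph admits a convex straight-line drawing, and its contrapositive form: if $G$ is $3$-connected and non-planar, then $G$ contains a Kuratowski subdivision. A cleaner route is to pick a non-separating induced cycle $C$ in $G$ (which exists for $3$-connected graphs by Tutte's wheel/non-separating cycle theorem), draw $C$ as the boundary of a disk, and consider the bridges of $C$ in $G$. Each bridge has a set of attachments on $C$, yielding an overlap graph on the bridges. The key combinatorial fact is that $G$ is planar iff this overlap graph is bipartite; if it is not bipartite, one finds an odd cycle of pairwise conflicting bridges, and a careful case analysis on the number and pattern of attachments extracts either a $K_5$-subdivision (when three bridges have a common attachment pattern) or a $K_{3,3}$-subdivision (the generic case).

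I expect the main obstacle to be the $3$-connected case itself, specifically the case analysis at the end that produces the explicit Kuratowski subdivision from an odd conflict cycle of bridges. The $2$-connected reduction is also somewhat technical because one has to be careful that the added virtual edge across a $2$-cut does not spuriously create a $K_5$ or $K_{3,3}$ subdivision that is not present in $G$; I would handle this by showing that any such new subdivision must use the virtual edge, and then tracing a replacement path through the other side to produce a genuine subdivision in $G$. I would not attempt to verify the routine details of Euler's formula or the degree/connectivity reductions, since these are standard, and concentrate the writeup on the bridge-conflict argument.
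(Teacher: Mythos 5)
This statement is Kuratowski's theorem. The paper does not prove it at all: it is quoted as a known classical result (cited there to Walsh's paper, though the theorem itself goes back to Kuratowski), and it is used only as a tool in the planarity arguments of Section 3. So there is no ``paper's proof'' to compare against; what you have written is a from-scratch outline of the classical theorem itself.

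As an outline, your plan is the standard textbook route and is sound. The easy direction via Euler's formula is correct (for $K_5$: $f=7$ and $2e\ge 3f$ gives $20\ge 21$; for $K_{3,3}$: $f=5$ and $2e\ge 4f$ gives $18\ge 20$), and the reverse direction via a minimal counterexample, reduction through minimum degree $\ge 3$, $2$-connectivity, and $3$-connectivity, followed by either Tutte's convex-embedding theorem or the non-separating-cycle/bridge-overlap argument, is exactly how the theorem is proved in Diestel or Bondy--Murty. You correctly flag the two genuinely delicate points: that the virtual edge added across a $2$-separation might create a spurious Kuratowski subdivision (which must then be rerouted through the other side of the separation), and that the final extraction of a $K_5$- or $K_{3,3}$-subdivision from an odd cycle of pairwise overlapping bridges requires a real case analysis that your proposal defers rather than carries out. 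So what you have is a correct and well-organized plan, not yet a proof; the deferred bridge-conflict analysis is where essentially all of the remaining work lies. For the purposes of this paper, none of this is needed -- the theorem is legitimately invoked as a black box.
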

	\begin{theorem}\cite{walsh1975hypermaps} 
		A hypergraph is planar iff its incidence graph is planar.
	\end{theorem}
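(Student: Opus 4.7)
The plan is to establish a topological equivalence between two kinds of planar drawings. Before beginning, I would fix the standard notion of a planar hypergraph: $\mathcal{H}$ is planar if it admits a drawing in the plane in which each vertex of $\mathcal{H}$ is a point, each hyperedge $e \in E(\mathcal{H})$ is a simply connected closed region whose intersection with $V(\mathcal{H})$ is exactly $e$, and any two hyperedge-regions meet only at their common vertices. With this convention fixed, both directions reduce to a controlled deformation between a drawing of $\mathcal{H}$ and a drawing of $\mathcal{I}(\mathcal{H})$.

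For the forward direction, given a planar drawing of $\mathcal{H}$, I would choose an interior point $p_e$ inside the region representing each hyperedge $e$ and connect $p_e$ to every $v \in e$ by a simple arc lying inside that region, drawing the arcs for a fixed hyperedge like the spokes of a wheel so that they meet only at $p_e$. Taking $V(\mathcal{H}) \cup \{p_e : e \in E(\mathcal{H})\}$ as vertices and these arcs as edges then gives a planar embedding of $\mathcal{I}(\mathcal{H})$, because arcs belonging to distinct hyperedges lie in disjoint open regions except possibly at shared vertices of $\mathcal{H}$.

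For the converse, starting from a planar embedding of $\mathcal{I}(\mathcal{H})$, I would thicken each hyperedge-vertex $p_e$ together with its incident edges into a small closed topological disk containing $p_e$ and thin tubes running along each edge up to its endpoint in $V(\mathcal{H})$. Bipartiteness of $\mathcal{I}(\mathcal{H})$ ensures that edges attached to different hyperedge-vertices interact only through shared vertices of $\mathcal{H}$, so these thickenings can be chosen pairwise disjoint outside $V(\mathcal{H})$. Erasing the auxiliary points $p_e$ leaves a family of simply connected regions whose intersection pattern with $V(\mathcal{H})$ is exactly the hyperedge structure, i.e.\ a planar drawing of $\mathcal{H}$. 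The main obstacle is precisely this last step: one has to justify that the thickenings of the stars centered at different $p_e$ can be made simultaneously simply connected and non-overlapping, which I would handle by working inside a sufficiently small face-neighborhood of each star in the planar embedding and appealing to the Jordan curve theorem to guarantee the required separations.
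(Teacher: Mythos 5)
The paper does not prove this statement at all: it is imported verbatim from Walsh's paper as a known result and used as a black box, so there is no internal proof to compare against. Your sketch is the standard ``star-and-thicken'' argument for the equivalence of Zykov-planarity of a hypergraph with planarity of its K\"onig (incidence) representation, and it is essentially correct as a proof outline. The one place that deserves a little more care than you give it is the converse at a vertex $v$ shared by several hyperedges: a genuine regular neighborhood of the closed star at $p_e$ contains a full two-dimensional neighborhood of $v$, so two such thickenings would overlap in an open set near $v$ rather than meeting only at $v$. The standard fix is to pinch each region to the point $v$ there (take a neighborhood of the open star together with the endpoints themselves), and one should also say explicitly that the neighborhoods are chosen thin enough to avoid all vertices of $V(\mathcal{H})$ not in $e$; with those two remarks your argument goes through. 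Since the result depends on which notion of hypergraph planarity one adopts, fixing the definition up front, as you do, is exactly the right move.
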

	
	\begin{lemma}\label{genus}\cite{white1985graphs}
		If $G$ is a graph with $n$ vertices, $m$ edges, girth $gr$ and genus $g$, then $$ \dfrac{m(gr-2)}{2 \cdot gr}- \dfrac{n}{2} + 1 \leq g $$
	\end{lemma}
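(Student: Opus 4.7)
The plan is to apply Euler's formula for an embedding of $G$ on the orientable surface $S_g$ of minimum genus, combined with a standard face–girth counting argument. First I would fix a $2$-cell embedding of $G$ in $S_g$ (a minimum-genus embedding may be taken to be $2$-cell, which is the technically delicate point) and let $f$ denote the number of faces. Euler's formula then reads
\begin{equation*}
n - m + f \;=\; 2 - 2g.
\end{equation*}

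Next I would bound $f$ from above using the girth. Every face of a $2$-cell embedding is bounded by a closed walk, and since $G$ has girth $gr$, any closed walk in $G$ has length at least $gr$ (a closed walk contains a cycle). Summing boundary lengths over all faces, each edge of $G$ is counted at most twice, giving
\begin{equation*}
gr \cdot f \;\leq\; \sum_{F} |\partial F| \;\leq\; 2m,
\end{equation*}
so $f \leq 2m/gr$. Substituting this into Euler's formula yields
\begin{equation*}
2 - 2g \;=\; n - m + f \;\leq\; n - m + \frac{2m}{gr},
\end{equation*}
and rearranging gives exactly the desired inequality
\begin{equation*}
\frac{m(gr-2)}{2\,gr} - \frac{n}{2} + 1 \;\leq\; g.
\end{equation*}

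The main obstacle is the justification that one may assume the optimal embedding is $2$-cell; this is a classical theorem (any minimum-genus orientable embedding of a connected graph is $2$-cell, due to Youngs), so it can simply be invoked. A secondary care-point is degenerate cases: if $gr = \infty$ (i.e.\ $G$ is a forest) or $G$ is disconnected, the inequality is vacuous or handled by applying the bound component-wise and using monotonicity of genus under disjoint union. Apart from these bookkeeping issues, the proof reduces to the two displayed lines above.
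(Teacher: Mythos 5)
The paper offers no proof of this lemma: it is quoted verbatim as a known result from White's \emph{Graphs, groups and surfaces}, so there is nothing internal to compare your argument against. Your derivation is the standard one (Euler's formula $n-m+f=2-2g$ for a $2$-cell minimum-genus embedding, the face--girth count $gr\cdot f\le 2m$, and Youngs' theorem to justify that a minimum-genus embedding of a connected graph is $2$-cell), and the algebra checks out. The only imprecision is the parenthetical claim that ``a closed walk contains a cycle'' --- a boundary walk that traverses a bridge back and forth need not --- but the correct justification is routine (any edge lying on two distinct faces is traversed once by each boundary walk, forcing a cycle; the single-face case gives $f=1\le 2m/gr$ directly), and the degenerate cases you list are handled exactly as you say.
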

	\begin{theorem}\label{incidencegraph}\cite{walsh1975hypermaps}
		For any hypergraph $\mathcal{H}$, $g(\mathcal{H})=g(\mathcal{I}(\mathcal{H}))$.
	\end{theorem}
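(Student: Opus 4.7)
The plan is to establish $g(\mathcal{H})=g(\mathcal{I}(\mathcal{H}))$ by proving both inequalities through mutually inverse topological constructions on the embedding surface. The guiding idea is that a hypergraph embedding, in which every hyperedge is realised as a closed topological disk containing its incident vertices and distinct disks meet only at shared vertices, encodes the same cyclic rotation data around each vertex as an embedding of the bipartite incidence graph.

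First I would prove $g(\mathcal{I}(\mathcal{H}))\le g(\mathcal{H})$. Starting from a minimum-genus embedding of $\mathcal{H}$ on a surface $S$ of genus $g(\mathcal{H})$, each hyperedge $e$ is represented by a closed disk $D_e$ containing the vertices $v\in e$. For every $e$ I pick a point $p_e$ in the interior of $D_e$ and draw pairwise internally-disjoint simple arcs from $p_e$ to each $v\in e$, kept inside $D_e$. Since distinct $D_e$ intersect only at shared hypergraph vertices, the configuration with vertex set $V(\mathcal{H})\cup\{p_e:e\in E(\mathcal{H})\}$ and these arcs as edges is precisely an embedding of $\mathcal{I}(\mathcal{H})$ on $S$.

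For the reverse inequality, I would take a minimum-genus embedding of $\mathcal{I}(\mathcal{H})$ on a surface $S'$. For each hyperedge vertex $e$ of $\mathcal{I}(\mathcal{H})$, the point $e$ together with all incident edges $\{e,v\}$ forms an embedded star in $S'$, so a sufficiently thin tubular neighborhood of this star is a closed topological disk $D_e$ containing $e$ in its interior and the member vertices $v\in e$, while being disjoint from every other hyperedge vertex. Replacing each star by its disk $D_e$ and deleting the auxiliary point $e$ from the interior yields an embedding of $\mathcal{H}$ on $S'$, so $g(\mathcal{H})\le g(\mathcal{I}(\mathcal{H}))$.

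The main obstacle will be the topological bookkeeping in the second construction: one must choose the tubular neighborhoods so that the disks $D_e$ remain internally disjoint and meet only at the shared hypergraph vertices, which requires using compactness of the surface and the fact that distinct hyperedge vertices are separated in the incidence-graph embedding. The arc construction in the first direction is comparatively routine. Together the two constructions are mutually inverse up to ambient homeomorphism, which forces the two genera to agree.
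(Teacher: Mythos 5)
This statement is quoted in the paper from Walsh's 1975 article and is given there without proof, so there is no internal argument to compare yours against; I can only assess your proposal on its own terms. What you have written is the standard proof of Walsh's correspondence, and it is essentially sound: the first direction (subdividing each hyperedge disk $D_e$ by a center $p_e$ joined by internally disjoint arcs to the vertices of $e$) is correct as stated, and the second direction (thickening the star of each hyperedge-vertex of $\mathcal{I}(\mathcal{H})$ into a disk) is the right idea. The combination of the two inequalities is all that is needed; the closing remark that the constructions are ``mutually inverse up to ambient homeomorphism'' is not required and is slightly stronger than what you actually establish.

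One point deserves more than the ``bookkeeping'' label you give it. A regular neighborhood of the full star at $e$ contains the member vertices $v\in e$ in its \emph{interior}, so for two hyperedges $e,e'$ sharing a vertex $v$ the resulting disks $D_e$ and $D_{e'}$ would overlap in a two-dimensional neighborhood of $v$, violating the requirement that distinct hyperedge regions meet only in their common vertices (and, in most conventions, that the vertices lie on the boundary of each $D_e$). The fix is to pinch each $D_e$ at every $v\in e$: around $v$ the edges of $\mathcal{I}(\mathcal{H})$ incident to $v$ leave in a fixed cyclic order, so one can allot disjoint closed sectors at $v$ to the edges $\{e,v\}$ and take $D_e$ to be the union of the thickened star with these sectors, so that $v$ sits on $\partial D_e$ and $D_e\cap D_{e'}=\{v : v\in e\cap e'\}$. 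With that adjustment made explicit, and with the topological definition of a hypergraph embedding stated up front (the paper itself never defines $g(\mathcal{H})$ intrinsically), your argument is complete.
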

	
	The following two results characterize planarity and non-planarity of the hypergraph $\tilde{\Gamma}_\mathcal{H}(D_n)$ on $D_n$. We characterize the hypergraph $\tilde{\Gamma}_\mathcal{H}(D_n)$ which are planar in terms of $n$. Also, we characterize the non-planar hypergraph $\tilde{\Gamma}_\mathcal{H}(D_n)$ in terms of genus of its incidence graph.
	\begin{theorem}\label{planar}
		$\tilde{\Gamma}_\mathcal{H}(D_n)$ is planar iff $n$ is a prime or $n =4$.

	\end{theorem}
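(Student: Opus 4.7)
The plan is to pass to the incidence graph and use the characterization that a hypergraph is planar iff its incidence graph is planar, together with Kuratowski. For the easy direction, if $n=p$ is prime, Theorem \ref{single hyperedge} forces $\tilde{\Gamma}_\mathcal{H}(D_n)$ to have a single hyperedge of $p+1$ vertices, so its incidence graph is the star $K_{1,p+1}$ which is planar. For $n=4$, Example \ref{d4eg} lists the $8$ vertices and $4$ hyperedges explicitly; I would verify planarity by an explicit embedding, placing $e_1,e_2$ as two foci and arranging the four common reflection vertices $H_3,H_4,H_5,H_6$ between them (the induced $K_{2,4}$ is planar), then attaching the pendants $H_2,H_7,H_8,H_9$ and the smaller hyperedges $e_3,e_4$ on the outside.

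For the converse I would prove the contrapositive: if $n$ is composite and $n\neq 4$, then $\mathcal{I}(\tilde{\Gamma}_\mathcal{H}(D_n))$ contains $K_{3,3}$ and is therefore non-planar. The key observation is that the family $R=\{<a^ib>\,:\,0\leq i\leq n-1\}$ of all $n$ reflection subgroups is pairwise trivially intersecting, and once $R$ is present no Type (2) subgroup can be added to the family (it would contain some $a^jb$ already in the reflection $<a^jb>\in R$). So every maximal extension of $R$ has the form $R\cup F$ where $F$ is a family of Type (1) subgroups $<a^r>$ satisfying $\mathrm{lcm}(r,r')=n$ for distinct $r,r'\in F$. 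My aim is to produce three distinct such hyperedges $e_1,e_2,e_3$ and three reflections lying in $e_1\cap e_2\cap e_3$; the six corresponding nodes then induce $K_{3,3}$ in the incidence graph.

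The case split runs as follows. If $n=p^\alpha$ with $\alpha\geq 3$, the chain $<a>\supset <a^p>\supset\cdots\supset <a^{p^{\alpha-1}}>$ shows that the $\alpha$ sets $R\cup\{<a^{p^i}>\}$, $i=0,\ldots,\alpha-1$, are pairwise distinct maximal trivially-intersecting families, so three of them suffice (and three reflections are available since $n\geq 8$). If $n=p^2$ with $p$ odd, I would take $R\cup\{<a>\}$ and $R\cup\{<a^p>\}$ as $e_1,e_2$, and let $e_3$ be a maximal extension of $\{<a^p,b>\}\cup\{<a^ib>\,:\,p\nmid i\}$, which retains $p^2-p\geq 6$ reflections. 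If $n$ has at least two distinct prime divisors, writing $n=p_1^{\alpha_1}m$ with $\gcd(p_1,m)=1$ and $m>1$, I would take $e_1=R\cup\{<a>\}$, $e_2=R\cup\{<a^{p_1^{\alpha_1}}>,<a^m>\}$ (coprimality of the two exponents guarantees trivial intersection), and $e_3$ a maximal extension of $\{<a^{p_1},b>\}\cup\{<a^ib>\,:\,p_1\nmid i\}$, which keeps $n(1-1/p_1)\geq 3$ reflections because $n\geq 6$. In every subcase the three chosen reflections lie in $e_1\cap e_2\cap e_3$, yielding $K_{3,3}$.

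The main technical hurdle is maximality: using $<a^{r_1}>\cap<a^{r_2}>=<a^{\mathrm{lcm}(r_1,r_2)}>$, one must check in each subcase that no further Type (1) generator satisfies the pairwise-lcm-equals-$n$ constraint with the members already listed, and that no Type (2) subgroup is compatible once the relevant reflections are present. The secondary task is explaining why $n=4$ genuinely escapes this argument, and the reason emerges from the prime-power subcase: when $p=\alpha=2$, the Type (2) candidate $<a^2,b>$ accommodates only $p^2-p=2$ reflections in its extension, one short of what is needed to complete $K_{3,3}$, which is precisely why $\tilde{\Gamma}_\mathcal{H}(D_4)$ remains planar.
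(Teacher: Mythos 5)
Your overall skeleton matches the paper's: both directions go through the incidence graph (planar iff $\mathcal{I}(\tilde{\Gamma}_\mathcal{H}(D_n))$ is planar), the prime case via the single-hyperedge theorem, $n=4$ via an explicit drawing, and the composite case via a $K_{3,3}$ built from three hyperedges sharing three common reflection vertices. Where you diverge is in how the three hyperedges are produced, and here the paper's route is noticeably lighter. You try to identify the hyperedges explicitly (as $R\cup F$ with $R$ the full set of reflections, plus a separate family for the edge through $<a^p,b>$), which forces a case split on the factorization of $n$ and obliges you to verify maximality in each subcase — the ``technical hurdle'' you flag but do not carry out (it does go through: e.g.\ for $n=p^\alpha$ the lcm condition kills any second Type~(1) member, and in the two-prime case $\mathrm{lcm}(r,p_1^{\alpha_1})=\mathrm{lcm}(r,m)=n$ forces $r=n$). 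The paper avoids all of this with one uniform choice valid for every composite $n\neq 4$: take $K_1=<a>$, $K_2=<a^p>$, $K_3=<a^p,b>$ for a prime divisor $p$ of $n$, and three reflections $<a^{l_i}b>$ with $l_i\not\equiv 0 \pmod p$ (possible since $n(1-1/p)\geq 3$ exactly when $n$ is composite and $n\neq 4$). Since the $K_j$ pairwise intersect nontrivially, the three maximal extensions of $\{H_1,H_2,H_3,K_j\}$ are automatically distinct, and no maximality computation is ever needed — one only uses that every pairwise trivially intersecting family extends to some hyperedge. Your version is correct and your closing observation (that $n=4$ fails precisely because only $p^2-p=2$ reflections avoid $<a^2,b>$) is the same obstruction the paper exploits, but you should either complete the maximality checks or restructure around ``some hyperedge containing this set'' plus a conflict argument for distinctness, as the paper does.
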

	\begin{proof} Suppose that $\tilde{\Gamma}_\mathcal{H}(D_n)$ is planar. Assume $n$ is neither a prime nor $n=4$. We will find three distinct vertices $H_1,H_2,H_3$ of $\tilde{\Gamma}_\mathcal{H}(D_n)$ and three distinct hyperedges $e_1,e_2,e_3$ of  $\tilde{\Gamma}_\mathcal{H}(D_n)$ such that all $H_1,H_2,H_3$ belongs to each $e_1,e_2,e_3$. Consider the  subhypergraph $G$ of $\tilde{\Gamma}_\mathcal{H}(D_n)$ with  $  \{e_1,e_2,e_3\}$ as hyperedge set and  the set of all vertices in $e_1,e_2,\, \text{and} \,\, e_3$ as the vertex set of $G$. So, the incidence graph $\mathcal{I}(G)$ of G  contain $K_{3,3}$. Hence, the incidence graph $\mathcal{I}(\tilde{\Gamma}_\mathcal{H}(D_n))$ of  $\tilde{\Gamma}_\mathcal{H}(D_n)$, contains $K_{3,3}$ and so 	$\tilde{\Gamma}_\mathcal{H}(D_n)$ is non-planar, a contradiction.
		
		\begin{figure}[h]
			\centering\subfloat[Subhypergraph $G$ of $\tilde{\Gamma}_\mathcal{H}(D_n)$]{	\begin{tikzpicture}[scale=.65]
					
					% Draw ellipses with clear borders
					\draw[thick] (-2,0) ellipse (3 and 1);
					\draw[thick] (2,0) ellipse (3 and 1);
					\draw[thick] (0,-2) ellipse (1 and 3);
					
					% Add labels for ellipses
					\node at (-4, 1) {$e_1$};
					\node at (0, 1.2) {$e_2$};
					\node at (4, 1) {$e_3$};
					
					% Add nodes
					
					\filldraw (-0.35, 0.4cm) circle (2pt) node[right] {${H_1}$};
					\filldraw (-0.35, 0cm) circle (2pt) node[right] {${H_2}$};
					\filldraw (-0.35, -.4cm) circle (2pt) node[right] {${H_3}$};
					\filldraw (0, -2cm) circle (2pt) node[below] {${K_2}$};
					\filldraw (-2, 0cm) circle (2pt) node[below] {${K_1}$};
					\filldraw (2, 0cm) circle (2pt) node[below] {${K_3}$};
					
			\end{tikzpicture}}
			\label{G}
			\hspace{.50 cm}
			\subfloat[$K_{3,3}$ in $\mathcal{I}(G)$]{
				\begin{tikzpicture}[scale=1.5, every node/.style={circle, fill=black, minimum size=6pt, inner sep=0pt}]
					
					% Nodes in set U
					\node[label=left:$H_1$] (u1) at (0,3) {};
					\node[label=left:$H_2$] (u2) at (0,2) {};
					\node[label=left:$H_3$] (u3) at (0,1) {};
					
					% Nodes in set V
					\node[label=right:$e_1$] (v1) at (2,3) {};
					\node[label=right:$e_2$] (v2) at (2,2) {};
					\node[label=right:$e_3$] (v3) at (2,1) {};
					
					% Edges
					\draw (u1) -- (v1);
					\draw (u1) -- (v2);
					\draw (u1) -- (v3);
					\draw (u2) -- (v1);
					\draw (u2) -- (v2);
					\draw (u2) -- (v3);
					\draw (u3) -- (v3);
					\draw (u3) -- (v2);
					\draw (u3) -- (v1);
					
			\end{tikzpicture}}
			
			\label{K33}

		\end{figure}
		\noindent Choose  $ K_1=<a>, K_2=<a^p>\, \text{and}\, K_3=<a^p,b>$,  where $p$ is a prime divisor of $n$. Also, choose positive integers $l_1,l_2,l_3$ from the set $ \{0,1,2,\cdots,n-1\}$ such that $ l_i \not\equiv 0 (mod \, p)$. So, $a^{l_1}b,a^{l_2}b,a^{l_3}b\notin K_3$. Consider $H_1 =$ $<a^{l_1}b>, H_2=<a^{l_2}b> \, \text{and} \, H_3=<a^{l_3}b>$. Thus,  $H_i \cap H_j=\{e\}$ for  $i\neq j \, \text{and} \, i,j \in \{1,2,3\}.$ Also, $  K_i \cap K_j \neq \{e\}$ and $ H_i \cap K_j = \{e\} \, \, \text{for all} \, \, i,j \in \{1,2,3\}.$ Hence, there exists three distinct hyperedges $e_1,e_2,e_3$ such that $e_1$ contains $H_1,H_2,H_3$ and $K_1$, $e_2$ contains $H_1,H_2,H_3$ and $K_2$, and $e_3$ contains $H_1,H_2,H_3$ and $K_3$.

		Conversely, first suppose that $n$ is prime. By Theorem \ref{single hyperedge}, $\tilde{\Gamma}_\mathcal{H}(D_n)$ has a single hyperedge. So, the corresponding incidence graph can be embedded on a plane and hence, $\tilde{\Gamma}_\mathcal{H}(D_n)$	is planar.

		Now, for $n=4$, we have the group $D_4$ as given in Example \ref{d4eg}.
		The incidence graph $\mathcal{I}(\tilde{\Gamma}_\mathcal{H}(D_4))$ of $\tilde{\Gamma}_\mathcal{H}(D_4)$ is depicted in Figure  \ref{IncidenceGraph}. 	Clearly, $\mathcal{I}(\tilde{\Gamma}_\mathcal{H}(D_4))$ can be embedded on a plane. So, $\tilde{\Gamma}_\mathcal{H}(D_4)$ is planar.
		\begin{figure}[h]
			\centering
			\begin{tikzpicture}[scale=.35, every node/.style={circle, fill=black, minimum size=6pt, inner sep=0pt}]
				
				% Nodes in set U
				\node[label=left:$H_2$] (f1) at (-1,2) {};
				\node[label=left:$H_3$] (f2) at (-1,1) {};
				\node[label=left:$H_4$] (f3) at (-1,0) {};
				\node[label=left:$H_5$] (f4) at (-1,-1) {};
				\node[label=left:$H_6$] (f5) at (-1,-2) {};
				\node[label=left:$H_7$] (f6) at (-1,-3) {};
				\node[label=left:$H_8$] (f7) at (-1,-4) {};
				\node[label=left:$H_9$] (f8) at (-1,-5) {};

				% Nodes in set V
				\node[label=right:$e_1$] (s9) at (6,1) {};
				\node[label=right:$e_2$] (s10) at (6,0) {};
				\node[label=right:$e_3$] (s11) at (6,-1) {};
				\node[label=right:$e_4$] (s12) at (6,-2) {};

				% Edges
				\draw (f1) -- (s9);
				\draw (f2) -- (s9);
				\draw (f2) -- (s10);
				\draw (f2) -- (s11);
				\draw (f3) -- (s9);
				\draw (f3) -- (s10);
				\draw (f3) -- (s12);
				\draw (f4) -- (s9);
				\draw (f4) -- (s10);
				\draw (f4) -- (s11);
				\draw (f5) -- (s9);
				\draw (f5) -- (s10);
				\draw (f5) -- (s12);
				\draw (f6) -- (s10);
				\draw (f7) -- (s12);
				\draw (f8) -- (s11);
				
			\end{tikzpicture}
			\caption{ $\mathcal{I}(\tilde{\Gamma}_\mathcal{H}(D_4))$}
			\label{IncidenceGraph}
		\end{figure}
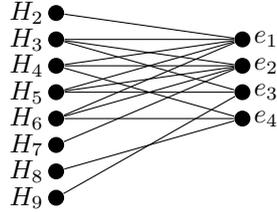
	\end{proof}
	
	\begin{theorem}
		$\tilde{\Gamma}_\mathcal{H}(D_n)$ is non-planar iff the orientable genus,	$g(\tilde{\Gamma}_\mathcal{H}(D_n))$ is at least 2.
	\end{theorem}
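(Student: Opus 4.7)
The plan is to dispose of the backward direction immediately (any graph with positive genus is non-planar) and to focus on the forward direction: if $\tilde{\Gamma}_\mathcal{H}(D_n)$ is non-planar, then by Theorem \ref{planar} $n$ is composite with $n \neq 4$, and by Theorem \ref{incidencegraph} it suffices to show that the bipartite incidence graph $\mathcal{I}(\tilde{\Gamma}_\mathcal{H}(D_n))$ (girth at least $4$) has orientable genus at least $2$.

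The central strategy is to locate $K_{3,7}$ as a subgraph of this incidence graph. Since Lemma \ref{genus} applied to $K_{3,7}$ itself (10 vertices, 21 edges, girth $4$) yields $g(K_{3,7}) \geq 21/4 - 5 + 1 = 5/4$, hence $g(K_{3,7}) \geq 2$, and genus is monotone under taking subgraphs, this would immediately settle the forward direction. Adapting the construction from the proof of Theorem \ref{planar}, I pick three subgroups $K_1, K_2, K_3$ of $D_n$ that pairwise intersect non-trivially but together trivially intersect at least seven reflection subgroups $\langle a^i b\rangle$. The $K_j$'s then lie in three distinct hyperedges $e_1, e_2, e_3$ (distinct because no hyperedge can contain two non-trivially intersecting subgroups), and each $e_j$ also contains each of those seven common reflection subgroups, producing the required $K_{3,7}$.

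Two sub-constructions realize this template. When $n$ has at least three non-trivial proper cyclic subgroups of $\langle a\rangle$ (equivalently, $n$ admits at least four divisors) and $n \geq 7$, I take $K_1, K_2, K_3$ to be any three such cyclic subgroups; every $\langle a^i b\rangle$ trivially intersects every cyclic subgroup of $\langle a\rangle$, so the incidence graph contains $K_{3,n} \supseteq K_{3,7}$. When $n = p^2$ for a prime $p \geq 5$, I take $K_1 = \langle a\rangle, K_2 = \langle a^p\rangle, K_3 = \langle a^p, b\rangle$; then the reflections $\langle a^i b\rangle$ with $i \not\equiv 0 \pmod p$, numbering $p^2 - p \geq 20$, all trivially intersect each $K_j$, so the incidence graph contains $K_{3, p^2-p}$. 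These two sub-constructions together cover every composite $n \neq 4$ except $n = 6$ and $n = 9$, where the common-reflection count maxes out at only $6$.

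The hard part will be handling the two exceptional values $n \in \{6, 9\}$, where the generic $K_{3,7}$ argument misses by a single vertex. For these, I plan to enumerate all maximal pairwise-trivially-intersecting families of subgroups explicitly and apply Lemma \ref{genus} directly. For $n = 6$, I expect the hypergraph to have $14$ vertices and $7$ hyperedges of sizes $7, 8, 5, 5, 6, 6, 6$, so the incidence graph has $21$ vertices and $43$ edges and Lemma \ref{genus} gives $g \geq 43/4 - 21/2 + 1 = 5/4$, hence $g \geq 2$. For $n = 9$, I expect $14$ vertices and $5$ hyperedges of sizes $10, 10, 7, 7, 7$, so the incidence graph has $19$ vertices and $41$ edges and Lemma \ref{genus} gives $g \geq 41/4 - 19/2 + 1 = 7/4$, hence $g \geq 2$. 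Closing these two tight small cases by explicit hyperedge enumeration is the main obstacle of the proof.
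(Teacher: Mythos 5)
Your overall architecture is reasonable and partly coincides with the paper's: the backward direction is immediate, the forward direction reduces via Theorem \ref{planar} and Theorem \ref{incidencegraph} to bounding the genus of the incidence graph, and your explicit treatments of $n=6$ and $n=9$ (vertex and edge counts $21,43$ and $19,41$, then Lemma \ref{genus}) are exactly the paper's Subcases 1.1 and 2.1(a). The $K_{3,7}$-plus-monotonicity idea is a legitimate alternative to the paper's habit of applying Lemma \ref{genus} to larger bespoke subhypergraphs. However, your first sub-construction has a genuine gap, and with it your coverage claim fails. For $e_1,e_2,e_3$ to be distinct you need $K_1,K_2,K_3$ to pairwise intersect non-trivially, but ``any three'' non-trivial cyclic subgroups of $\langle a\rangle$ need not do so, and when $n=pq$ is a product of two distinct primes no such triple of rotation subgroups exists at all: the only candidates are $\langle a\rangle$, $\langle a^p\rangle$, $\langle a^q\rangle$, and $\langle a^p\rangle\cap\langle a^q\rangle=\langle a^{pq}\rangle=\{e\}$. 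In that case $\langle a^p\rangle$, $\langle a^q\rangle$ and all $n$ reflection subgroups form a single pairwise-trivially-intersecting family, so $\langle a^p\rangle$ and $\langle a^q\rangle$ lie in the \emph{same} hyperedge; your construction then produces only two distinct hyperedges and a planar $K_{2,n}$, not $K_{3,7}$. Thus infinitely many cases ($n=10,14,15,21,22,\dots$) are left uncovered, contrary to your assertion that only $n=6$ and $n=9$ escape the two sub-constructions. (Even where a good triple exists the word ``any'' is wrong: for $n=12$, $\langle a^4\rangle\cap\langle a^6\rangle=\{e\}$; you need a nested chain such as $\langle a\rangle\supset\langle a^p\rangle\supset\langle a^{p^2}\rangle$.)

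The repair is essentially your own second sub-construction, which is also what the paper does in its Subcase 2.2: for composite $n$ take $K_1=\langle a\rangle$, $K_2=\langle a^{k}\rangle$, $K_3=\langle a^{k},b\rangle$ for a suitable proper divisor $k$ of $n$; these pairwise contain $\langle a^{k}\rangle\neq\{e\}$, and the reflections $\langle a^ib\rangle$ with $i\not\equiv 0\pmod{k}$ avoid all three, so it remains only to check that at least seven such $i$ exist (taking $k$ to be the largest prime divisor of $n$ works for every composite $n\geq 10$ other than prime powers of $2$, which your chain argument already handles). Until a choice of this kind is made and verified case by case, the forward direction is not established for $n=pq$.
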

	\begin{proof}
		It is clear that if the genus $g(\tilde{\Gamma}_\mathcal{H}(D_n))$ is at least two, then $\tilde{\Gamma}_\mathcal{H}(D_n)$ is non-planar. 
		
		Conversely, suppose that $\tilde{\Gamma}_\mathcal{H}(D_n)$ is non-planar. Now, by Theorem \ref{planar},  $\tilde{\Gamma}_\mathcal{H}(D_n)$ is non-planar iff $n$ is neither a prime nor  $ n= 4$. To prove that $g(\tilde{\Gamma}_\mathcal{H}(D_n))$ is atleast 2, we consider the following cases:

		\noindent \textbf{Case 1}. $n$ is even.

		\noindent $\underline{\textbf{Subcase 1.1}}$ Suppose $n=6$.
		The non-trivial proper subgroups of $D_6$ are as follows:\\
		$H_2 =<a^2>, H_3=<a^3>, H_4=\{e,b\}, H_5=\{e,a^3b\}, H_6=\{e,ab\}, H_7=\{e,a^4b\}, H_8=\{e,a ^2b\}, \\ H_9=\{e,a^3b\}, H_{10}=<a^2,ab>, H_{11}=<a>, H_{12}=<a^2,b>, H_{13}=<a^3,b>, H_{14}=<a^3,ab>,\\$ $ H_{15}=<a^3,a^2b>$. So, the hyperedges of $\tilde{\Gamma}_\mathcal{H}(D_6)$ are:\\
		$e_1=\{H_2,H_3,H_4,H_5,H_6,H_7,H_8,H_9\}$,
		$e_2=\{H_2,H_4,H_5,H_8,H_9,H_{14}\}$,
		$e_3=\{H_2,H_4,H_5,H_6,H_7,H_{15}\}$,
		$e_4=\{H_2,H_6,H_7,H_8,H_9,H_{13}\}$,
		$e_5=\{H_3,H_4,H_7,H_8,H_{10}\}$,
		$e_6=\{H_3,H_5,H_6,H_9,H_{12}\}$ and\\
		$e_7=\{H_4,H_5,H_6,H_7,H_8,H_9, H_{11}\}$.\\
		In the incidence graph $\mathcal{I}(\tilde{\Gamma}_\mathcal{H}(D_6))$ of $\tilde{\Gamma}_\mathcal{H}(D_6)$,  the number of vertices, $n_0 = 21$, the number of edges, $m=43$ and the girth,  $gr= 4$, as $H_2e_1H_4e_2H_2$  is a shortest cycle of length 4 in $\mathcal{I}(\tilde{\Gamma}_\mathcal{H}(D_6))$.\\ Thus, by Lemma \ref{genus}, the genus $
		g(\mathcal{I}(\tilde{\Gamma}_\mathcal{H}(D_6)))  \geq \dfrac{m(gr -2)}{2 \cdot gr}-\dfrac{n_0}{2} + 1 
		= \dfrac{43 \cdot 2}{2 \cdot 4} - \dfrac{21}{2}+ 1 
		= 1.25.$
		
		\noindent Consequently, by Theorem \ref{incidencegraph} $g(\tilde{\Gamma}_\mathcal{H}(D_6))$ is at least 2.

		\noindent	$\underline{\textbf{Subcase 1.2}}$ Suppose $n \geq 8$.
		Let $G$ be the subhypergraph $\tilde{\Gamma}_\mathcal{H}(D_{n})$ induced by the vertex set, \\
		$ \Omega=\{H_0,H_1,...,H_{n-1}, <a>, <a^{\frac{n}{2}}>, <a^{\frac{n}{2}},b>, <a^{\frac{n}{2}},ab>,<a^{\frac{n}{2}},a^2b>\},$
		where \\ $  H_i = <a^ib>,  \, i \in \{0,1,...,n -1\} $. 
		Hence, 
		$e_1 = \{H_0,H_1,...,H_{n-1},<a>\}$,\\
		$e_2 = \{H_0,H_1,...,H_{n-1},<a^{\frac{n}{2}}>\}$,
		$e_3 = \{H_1,H_2...,H_{n-1},<a^{\frac{n}{2}},b>\}$, \\
		$e_4 = \{H_0,H_2,...,H_{n-1},<a^{\frac{n}{2}},ab>\}$ and
		$e_5 = \{H_0,H_1,...,H_{n-1},<a^{\frac{n}{2}},a^2b>\}$
		are the only hyperedges of the subhypergraph $G$ induced by $\Omega$.
		Consider the corresponding incidence graph $\mathcal{I}(G)$ of $G$. In $\mathcal{I}(G)$, the number of vertices, $n_0= (n+5)+5, $
		the number of edges, $m = 5n-1,$ and 
		the girth, $gr = 4$. So, by Lemma \ref{genus} and as $n \geq 8$,
		\begin{align*}
			g(\mathcal{I}(\tilde{\Gamma}_\mathcal{H}(D_n)))  \geq \dfrac{m\cdot (gr - 2)}{2 \cdot gr} - \dfrac{n_0}{2} + 1 & \geq \dfrac{(5n-1)(4 - 2)}{2 \cdot 4} -\dfrac{n+10}{2} + 1  \\
			& = \dfrac{3n-17}{4}\\
			& \geq 1.75. 
		\end{align*}
		Consequently, by Theorem \ref{incidencegraph},  $g(\tilde{\Gamma}_\mathcal{H}(D_n))$ is at least 2.
		
		\noindent \textbf{Case 2}. $n$ is odd.
		
		\noindent $\underline{\textbf{Subcase 2.1}}$: $n = p^\alpha, \, p \, \, \text{is an odd prime} \, \text{and} \,   \alpha \geq 2 .$

		\noindent $\underline{\textbf{Subcase 2.1(a)}}:$ Suppose $p=3 \, \text{and} \,  \alpha=2, \,\, \text{i.e.,} \,  n = 3^2= 9$. 
		The non-trivial proper subgroups of $D_9$ are as follows:
		$H_2 =<a^3>, H_3=<a>,  H_4=\{e,b\}, H_5=\{e,ab\}, H_6=\{e,a^2b\}, H_7=\{e,a^3b\}, \\ H_8=\{e,a ^4b\}, H_9=\{e,a^5b\}, H_{10}=\{e,a^6b\}, H_{11}=\{e,a^7b\}, H_{12}=\{e,a^8 b\}, H_{13}=<a^3,b>,\\ H_{14}=<a^3,ab>,  H_{15}=<a^3,a^2b>  $. The following are the hyperedges of $\tilde{\Gamma}_\mathcal{H}(D_9)$:\\
		$e_1=\{H_2,H_4,H_5,H_6,H_7,H_8,H_9,H_{10},H_{11},H_{12}\}$,
		$e_2=\{H_3,H_4,H_5,H_6,H_7,H_8,H_9,H_{10},H_{11},H_{12}\}$,
		$e_3=\{H_5,H_6,H_8,H_9,H_{11},H_{12},H_{13}\}$,
		$e_4=\{H_4,H_6,H_7,H_9,H_{10},H_{12},H_{14}\}$ and 
		$e_5=\{H_4,H_5,H_7,H_8,H_{10},H_{11},H_{15}\}$.\\
		In  the incidence graph  $\mathcal{I}(\tilde{\Gamma}_\mathcal{H}(D_9))$ of $\tilde{\Gamma}_\mathcal{H}(D_9)$, the number of vertices, $n_0 = 19$, the number of edges, $m=41$ and the girth, $gr= 4$ as $H_4e_1H_5e_2H_4$ is a shortest cycle of length 4 in  $\mathcal{I}(\tilde{\Gamma}_\mathcal{H}(D_9))$. \\Thus, the genus $ g(\mathcal{I}(\tilde{\Gamma}_\mathcal{H}(D_9))) \geq \dfrac{m\cdot (gr - 2)}{2 \cdot gr} - \dfrac{n_0}{2} + 1 = \dfrac{41 \cdot 2}{2 \cdot 4} - \dfrac{19}{2} + 1 = 1.75.$
		
		\noindent Consequently, by Theorem \ref{incidencegraph}, $g(\tilde{\Gamma}_\mathcal{H}(D_9))$ is at least 2.

		\noindent $\underline{\textbf{Subcase 2.1(b)}}:$ Suppose $ n= p^\alpha >  9 \, \text{and}\, \alpha = 2, \, \text{i.e.,} \,  p^2 > 9.$
		Consider the subhypergraph $G$ induced by the vertex set $\Omega = \{H_0, H_1,...,H_{p^2 - 1}, <a>, <a^p>, <a^p,b>\},$ where $  H_i = <a^ib>, \,\\ i \in \{0,1,...,2^\alpha -1\}$. So, the hyperedges of the subhypergraph $G$ induced by $\Omega$ are:\\
		$e_1 = \{H_0,H_1,...,H_{p^2-1},<a>\}$,
		$e_2 = \{H_0,H_1,...,H_{p^2 -1},<a^p>\}$ and \\
		$e_3 = \{H_0,H_1,...,H_{p^2 -1},<a^p,b>\}$.  
		In the incidence graph $\mathcal{I}(G)$ of $G$, the number of vertices, $n_0= (p^2 + 3) + 3 = p^2 + 6, $ 
		the number of edges, $m= (p^2 +1)\cdot 2 + p^2 - p + 1 = 3 \cdot p^2-p + 3,$ and 
		the girth, $gr = 4$.
		So, by Lemma \ref{genus} and as $p \geq 5$,
		\begin{align*}
			g(\mathcal{I}(G)) \geq 	\dfrac{m\cdot (gr - 2)}{2 \cdot gr} - \dfrac{n_0}{2} + 1 & =\dfrac{(3 \cdot p^2-p + 3)(4 - 2)}{2 \cdot 4} -\dfrac{p^2 + 6}{2} + 1 \nonumber \\
			&= \dfrac{p(p-1)}{4} -\, 1 .25 \nonumber\\
			& \geq 5 - 1.25 =  3.75.
		\end{align*}
		
		\noindent Consequently, by Theorem \ref{incidencegraph},  $g(\tilde{\Gamma}_\mathcal{H}(D_n)) > 2$.

		\noindent  $\underline{\textbf{Subcase 2.1(c)}}:$ Suppose $n= p^\alpha >  9 \, \text{and}\, \alpha \geq 3.$ 
		Consider the subhypergraph $G$ induced by the vertex set, $\Omega = \{H_0, H_1,...,H_{p^\alpha - 1}, K_0, K_1,K_2\}$, where $  H_i = <a^ib>, \,  i \in \{0,1,...,p^\alpha -1\},$ and $K_j=<a^{p^j}>, \, j \in \{ 0,1,2\},  \,  . $ So, the hyperedges of $G$ are 
		$e_1 = \{H_0,H_1,...,H_{p^{\alpha}-1},K_0\}$,\\
		$e_2 = \{H_0,H_1,...,H_{p^{\alpha}-1},K_1\}$ and
		$e_3 = \{H_0,H_1,...,H_{p^{\alpha}-1},K_2\}$.
		Note that each hyperedge has $p^\alpha + 1 $ vertices. Consider the corresponding incidence graph, $\mathcal{I}(G)$. In $\mathcal{I}(G)$, the number of vertices, $n_0= (p^\alpha + 3) + 3 = p^\alpha + 6 $, 
		the number of edges, $m= (p^\alpha +1)\cdot 3 = 3 \cdot p^\alpha + 3,$ and 
		the girth, $gr = 4$. So, by Lemma \ref{genus} and as $p^\alpha > 9$,
		\begin{align*}
			\dfrac{m\cdot (gr - 2)}{2 \cdot gr} - \dfrac{n_0}{2} + 1 & =\dfrac{(3 \cdot p^\alpha + 3)(4 - 2)}{2 \cdot 4} -\dfrac{p^\alpha + 6}{2} + 1  \\
			& = \dfrac{p^\alpha}{4} - 1.25  > 1.
		\end{align*}
		
		\noindent Consequently, by Theorem \ref{incidencegraph}, $g(\tilde{\Gamma}_\mathcal{H}(D_n))$ is at least 2.
		
		\noindent $\underline{\textbf{Subcase 2.2}}:$ $n$ has at least two distinct prime divisors.

		Let $k$ be the largest integer such that $ k < n$ and $k \mid n $.	Then, $|<a^k>|=p_1$, where $p_1$ is the smallest prime divisor of $n$. Let $G$ be the subhypergraph of $\tilde{\Gamma}_\mathcal{H}(D_n)$	induced by the vertex set 
		$ \Omega=\{H_0,H_1,...,H_{n-1}, <a>, <a^k>, <a^k,b>\}$,
		where $ H_i = <a^ib>, \, i \in \{0,1,...,n -1\}. $
		So, the hyperedges of the subhypergraph $G$ induced by $\Omega$ are 
		$e_1 = \{H_0,H_1,...,H_{n-1},<a>\}$,\\
		$e_2 = \{H_0,H_1,...,H_{n-1},<a^{k}>\}$ and 
		$e_3 = \{H_0,H_1,...,H_{n-1},<a^k,b>\}$.\\
		In  the incidence graph $\mathcal{I}(G)$ of $G$, the number of vertices, $n_0= n + 6, $
		the number of edges, $m= 3n + 3 - p_1,$ and 
		the girth, $gr = 4$. Observe that, for all odd numbers that are not a power of prime, the difference between the number and its smallest prime divisor is strictly greater than 9. So,  by Lemma \ref{genus},	\begin{align*}
			g(G) \geq \dfrac{m\cdot (gr - 2)}{2 \cdot gr} - \dfrac{n_0}{2} + 1 & =\dfrac{(3n + 3 - p_1)(4 - 2)}{2 \cdot 4} -\dfrac{n + 6}{2} + 1 \nonumber \\
			&= \dfrac{n - p_1}{4} -\frac{5}{4} \\
			& >  \dfrac{9}{4} - 1.25 > 1.
		\end{align*}
		
		\noindent Consequently,  by Theorem \ref{incidencegraph}, $g(\tilde{\Gamma}_\mathcal{H}(D_n))$ is at least 2.
		
	\end{proof}
	
	\begin{corollary}
		If $\tilde{\Gamma}_\mathcal{H}(D_n)$ is non-planar, then it cannot be embedded on a torus.
	\end{corollary}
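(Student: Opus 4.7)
The plan is to derive this corollary as an immediate consequence of the preceding theorem, which already establishes the dichotomy that non-planarity of $\tilde{\Gamma}_\mathcal{H}(D_n)$ forces $g(\tilde{\Gamma}_\mathcal{H}(D_n)) \geq 2$. Since embedding on a torus is precisely the condition $g \leq 1$, the two statements are logically incompatible, and there is nothing further to compute.

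More explicitly, I would begin by recalling the definition of the torus as the surface $S_1$ obtained from the sphere by attaching a single handle, so that a graph or hypergraph embeds on the torus exactly when its orientable genus is at most $1$. Then I would invoke the previous theorem: assuming $\tilde{\Gamma}_\mathcal{H}(D_n)$ is non-planar, that theorem yields $g(\tilde{\Gamma}_\mathcal{H}(D_n)) \geq 2$. Combining these, $\tilde{\Gamma}_\mathcal{H}(D_n)$ cannot be embedded in $S_1$, which is the claim.

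There is no real obstacle here; the corollary is essentially a restatement of the preceding theorem in geometric language. The only subtlety worth flagging is the implicit use of Theorem \ref{incidencegraph}, which equates $g(\tilde{\Gamma}_\mathcal{H}(D_n))$ with $g(\mathcal{I}(\tilde{\Gamma}_\mathcal{H}(D_n)))$, so that the genus bounds derived for the incidence graph in the previous theorem indeed translate into genus bounds for the hypergraph itself. Beyond citing this identification, the proof should be a single sentence.
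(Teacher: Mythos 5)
Your proposal is correct and is exactly the argument the paper intends: the corollary is stated without a separate proof because it follows immediately from the preceding theorem, which shows non-planarity forces $g(\tilde{\Gamma}_\mathcal{H}(D_n)) \geq 2$, while torus-embeddability requires genus at most $1$. Your remark about the role of Theorem \ref{incidencegraph} in transferring the genus bound from the incidence graph to the hypergraph is a fair observation, though that identification is already built into the statement of the preceding theorem.
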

	
	\noindent The following results play a key role in proving the non-orientable genus of $\tilde{\Gamma}_\mathcal{H}(D_n)$.
	\begin{lemma}\cite{bojan}\label{kngenus}
		$\tilde{g}(K_{m,n}) = 
		\left\lceil \dfrac{(m-2)(n-2)}{2} \right\rceil, \,  m,n\geq 2$		
	\end{lemma}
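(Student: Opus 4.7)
The plan is to prove this classical Ringel formula by establishing matching lower and upper bounds on $\tilde{g}(K_{m,n})$. The lower bound will come from an Euler-characteristic argument using bipartiteness, while the upper bound requires an explicit embedding on $N_k$ with $k = \lceil (m-2)(n-2)/2 \rceil$.

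For the lower bound, I would invoke Euler's formula for a $2$-cell embedding of a connected graph on the non-orientable surface $N_k$, namely $V - E + F = 2 - k$. Since $K_{m,n}$ is bipartite with $m, n \geq 2$, every closed walk has even length and every face boundary has length at least $4$, so double-counting edge--face incidences gives $2E \geq 4F$, i.e.\ $F \leq E/2$. Substituting $V = m+n$ and $E = mn$ and rearranging yields
\[
k \;\geq\; 2 - (m+n) + \tfrac{mn}{2} \;=\; \tfrac{(m-2)(n-2)}{2},
\]
and since $k$ is an integer, $\tilde{g}(K_{m,n}) \geq \lceil (m-2)(n-2)/2 \rceil$. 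This half is a short, essentially routine calculation.

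For the upper bound one must exhibit an actual embedding attaining equality. The plan is to follow Ringel's approach: construct a quadrilateral embedding of $K_{m,n}$ in $N_k$, that is, a $2$-cell embedding in which every face is a $4$-cycle, so that the Euler bound above is saturated. Concretely, one specifies a rotation system together with edge signatures (to realise non-orientability) or, equivalently, a current/voltage graph whose derived embedding is quadrilateral. The argument splits into several residue classes of $m$ and $n$ modulo small integers; in those classes where $(m-2)(n-2)$ is odd, one additional crosscap is inserted along a carefully chosen arc so that the ceiling is tight and all resulting faces remain quadrilateral.

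The hard part will be the upper bound: the Euler lower bound is a one-liner, but producing a quadrilateral non-orientable embedding of $K_{m,n}$ for every pair $m, n \geq 2$ is a genuinely intricate construction involving combinatorial bookkeeping on rotation systems in each parity case. This is precisely why the statement is cited from \cite{bojan} rather than reproduced; inside the present paper it is needed only as a numerical tool, in the same spirit as Lemma \ref{genus}, to bound $\tilde{g}$ of incidence graphs of $\tilde{\Gamma}_\mathcal{H}(D_n)$ from below.
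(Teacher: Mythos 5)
The paper offers no proof of this lemma at all: it is quoted verbatim from \cite{bojan} (Mohar--Thomassen) and used purely as a black box, in fact only for the single value $\tilde{g}(K_{3,5})=2$ in the proof of the non-orientable genus theorem. So there is no in-paper argument to compare yours against. Your lower-bound half is correct and essentially complete: for a $2$-cell embedding in $N_k$ one has $V-E+F=2-k$, bipartiteness forces every face boundary to have length at least $4$, hence $2E\geq 4F$, and substituting $V=m+n$, $E=mn$ gives $k\geq (m-2)(n-2)/2$; you should just remark that a minimum-genus embedding of a connected graph may be taken to be $2$-cell, which is the standard fact that licenses the use of Euler's formula. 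The upper-bound half, however, is a plan rather than a proof: ``construct a quadrilateral embedding via rotation systems with signatures, splitting into residue classes'' names Ringel's construction without carrying out any of it, and that construction is the entire content of the theorem. As a self-contained proof the proposal therefore has a genuine gap; as a justification for citing the result, it is exactly right, and you correctly identify that the paper (reasonably) takes the same way out by deferring to \cite{bojan}.
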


	\begin{lemma}$\cite{bojan}$ \label{nongenus}
		Let $G$ be a connected graph with $n \geq 3$ vertices and
		$q$ edges. If $G$ contains no cycle of length 3, then $\tilde{g}(G) \geq \left\lceil \dfrac{q}{2} - n +2\right\rceil.$
	\end{lemma}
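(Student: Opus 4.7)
The plan is to argue via Euler's formula for a non-orientable minimum-genus embedding of $G$, combined with a face-length lower bound coming from the absence of triangles. Concretely, let $k = \tilde{g}(G)$ and fix an embedding of $G$ in the non-orientable surface $N_k$. For a connected graph, a minimum-genus embedding is always a $2$-cell embedding, so the Euler formula applies:
\begin{equation*}
n - q + f = 2 - k,
\end{equation*}
where $f$ is the number of faces. Rearranging gives $\tilde{g}(G) = 2 - n + q - f$, so the task reduces to producing a good upper bound on $f$.

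Next I would bound face lengths using the triangle-free hypothesis. In a $2$-cell embedding each face is bounded by a closed walk, and the sum of face lengths equals $2q$ (each edge contributes to exactly two face-sides). For a simple graph without $3$-cycles (and, implicitly, without loops or multiple edges), any closed boundary walk of length at most $3$ would force either a triangle or a loop, both of which are excluded; hence every face has length at least $4$. The double-counting inequality
\begin{equation*}
4f \,\leq\, \sum_{F} \operatorname{len}(F) \,=\, 2q
\end{equation*}
therefore yields $f \leq q/2$.

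Combining the two bounds,
\begin{equation*}
\tilde{g}(G) \,=\, 2 - n + q - f \,\geq\, 2 - n + q - \tfrac{q}{2} \,=\, \tfrac{q}{2} - n + 2,
\end{equation*}
and since $\tilde{g}(G)$ is a non-negative integer one can round up to obtain $\tilde{g}(G) \geq \lceil q/2 - n + 2 \rceil$, as required.

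The main technical point I would need to be careful about is the justification that the minimum-genus embedding of a connected graph is $2$-cell (so that Euler's formula with equality is available); this is a classical result in topological graph theory and can be invoked without reproof. A secondary subtlety is ruling out face walks of length $\leq 3$ even in the presence of cut-edges: a walk traversing a bridge twice contributes length $2$, so the only way to get a face walk of length $3$ is via a genuine $3$-cycle or a loop, both ruled out by hypothesis. Everything else is a clean application of double counting and rearrangement.
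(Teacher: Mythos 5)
This lemma is not proved in the paper at all: it is imported verbatim from Mohar and Thomassen's \emph{Graphs on Surfaces}, so there is no in-paper argument to compare against. Your proof is the standard one and it is correct: take a minimum-crosscap embedding, invoke cellularity so that Euler's formula $n-q+f=2-\tilde{g}(G)$ holds, bound every facial walk below by $4$ using triangle-freeness and simplicity, and double-count to get $f\leq q/2$. Two small points deserve care. First, the cellularity of a minimum-genus non-orientable embedding is genuinely the crux that forces the citation; for non-orientable surfaces the reduction of a non-cellular embedding can in principle land on an orientable surface, so the clean statement one needs is that every connected \emph{non-planar} graph has a cellular embedding in $N_{\tilde{g}(G)}$ --- and when $G$ is planar the claimed inequality is vacuous anyway, since Euler's formula in the plane gives $q\leq 2n-4$ for a triangle-free graph, i.e.\ $\lceil q/2-n+2\rceil\leq 0$. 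Second, your handling of short facial walks is right but worth stating crisply: in a simple graph a closed walk of length $1$ or $3$ forces a loop or a triangle, and a facial walk of length $2$ forces either a multiple edge or $G\cong K_2$, which is excluded by $n\geq 3$. With those two remarks your argument is a complete and self-contained proof of the quoted inequality.
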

	
	\begin{theorem}\label{incidencegraph1}\cite{walsh1975hypermaps}
		For any hypergraph $\mathcal{H}$, $\tilde{g}(\mathcal{H})=\tilde{g}(\mathcal{I}(\mathcal{H}))$.
	\end{theorem}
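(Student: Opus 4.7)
The plan is to mirror the argument for the orientable genus (Theorem \ref{incidencegraph}) and exhibit, for every non-orientable surface $N_k$, a two-way correspondence between embeddings of $\mathcal{H}$ on $N_k$ and embeddings of its incidence graph $\mathcal{I}(\mathcal{H})$ on $N_k$. Following Walsh, I would model a hypergraph embedding on $N_k$ by drawing each vertex $v \in V(\mathcal{H})$ as a point and each hyperedge $e \in E(\mathcal{H})$ as a closed topological disk $D_e \subset N_k$ whose boundary passes through precisely the vertices incident with $e$, with the open disks pairwise disjoint and disjoint from all vertex points.

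For the inequality $\tilde{g}(\mathcal{I}(\mathcal{H})) \le \tilde{g}(\mathcal{H})$, I would take a hypergraph embedding on $N_k$ and, inside each $D_e$, place a new point $u_e$ representing the edge-vertex of $\mathcal{I}(\mathcal{H})$ and draw non-crossing arcs inside $D_e$ from $u_e$ to each vertex of $e$ lying on $\partial D_e$. Since $D_e$ is a disk, such a ``star'' sits inside it with no self-crossings; since distinct disks have disjoint interiors, the stars belonging to different hyperedges cannot cross one another either. The resulting picture is an embedding of $\mathcal{I}(\mathcal{H})$ on the same surface $N_k$.

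For the reverse inequality $\tilde{g}(\mathcal{H}) \le \tilde{g}(\mathcal{I}(\mathcal{H}))$, I would start from an embedding of $\mathcal{I}(\mathcal{H})$ on $N_k$ and, for each edge-vertex $u_e$, construct a closed disk $D_e$ as the union of a small regular neighborhood of $u_e$ with thin tubular neighborhoods of the edges $u_e v$ for $v \in e$, pinched so that each such $v$ lies on $\partial D_e$. Shrinking the neighborhoods sufficiently keeps the various $D_e$'s disjoint in their interiors, producing a legitimate hypergraph embedding on $N_k$ and giving the matching bound.

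The main technical hurdle is committing to a precise definition of hypergraph embedding on a non-orientable surface and then checking that both the star-expansion and the star-contraction are purely local operations, performed inside topological disks, and hence indifferent to whether the ambient surface is orientable or not. Once that is verified, Walsh's construction for the orientable case transfers to $N_k$ verbatim, and the minimum $k$ on the two sides must coincide, yielding $\tilde{g}(\mathcal{H}) = \tilde{g}(\mathcal{I}(\mathcal{H}))$.
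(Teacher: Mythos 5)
This statement is quoted from Walsh's paper (the \cite{walsh1975hypermaps} reference) and the paper offers no proof of its own, so there is nothing internal to compare your argument against. Your sketch is essentially Walsh's hypermap--bipartite map correspondence: represent each hyperedge by a closed disk, expand it to a star centred at a new edge-vertex to embed $\mathcal{I}(\mathcal{H})$, and conversely contract a regular neighbourhood of each star (a tree, hence its neighbourhood is a disk) back to a hyperedge region. Both operations are local to disks and therefore insensitive to orientability, so the argument that works for $g$ transfers to $\tilde{g}$, as you say. The one point you should not leave implicit is the definition you adopt for $\tilde{g}(\mathcal{H})$: the equality is only meaningful once the non-orientable genus of a hypergraph is defined via such disk representations (equivalently, via hypermaps), and with that convention fixed your two inequalities do combine to give $\tilde{g}(\mathcal{H})=\tilde{g}(\mathcal{I}(\mathcal{H}))$. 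Subject to that, your proposal is a correct rendering of the standard proof of the cited result.
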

	\begin{theorem}
		$\tilde{\Gamma}_\mathcal{H}(D_n)$ is non-planar iff the non-orientable genus,	$\tilde{g}(\tilde{\Gamma}_\mathcal{H}(D_n))$ is at least 2.
	\end{theorem}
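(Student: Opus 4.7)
The plan is to mirror the orientable-genus theorem almost verbatim, swapping the Euler-type bound of Lemma \ref{genus} for the non-orientable bound of Lemma \ref{nongenus}. The easy direction is immediate: if $\tilde{g}(\tilde{\Gamma}_\mathcal{H}(D_n)) \geq 2$, then $\tilde{\Gamma}_\mathcal{H}(D_n)$ cannot be embedded in $N_0 = S_0$, so it is non-planar. For the converse, I would start from Theorem \ref{planar}, which reduces the problem to the case where $n$ is neither prime nor $4$, and then split into exactly the same subcases used in the orientable-genus argument: $n=6$; $n$ even with $n\geq 8$; $n=9$; $n=p^\alpha$ with $p^\alpha>9$; and $n$ odd with at least two distinct prime divisors.

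In every subcase I would reuse the subhypergraph $G$ (with the same vertex set $\Omega$ and the same explicit hyperedges $e_1,\dots,e_k$) already constructed there, and work on its incidence graph $\mathcal{I}(G)$. The crucial observation that lets Lemma \ref{nongenus} apply is that $\mathcal{I}(G)$ is bipartite by definition, so it contains no cycle of length $3$; the values of $n_0$, $m$, and the girth have already been computed in the previous proof and can be cited directly. Since $\mathcal{I}(G)$ is a subgraph of $\mathcal{I}(\tilde{\Gamma}_\mathcal{H}(D_n))$, non-orientable genus is monotone under subgraph inclusion, and by Theorem \ref{incidencegraph1} we have $\tilde{g}(\tilde{\Gamma}_\mathcal{H}(D_n))=\tilde{g}(\mathcal{I}(\tilde{\Gamma}_\mathcal{H}(D_n)))\geq \tilde{g}(\mathcal{I}(G))$, so any lower bound obtained on $\tilde{g}(\mathcal{I}(G))$ from Lemma \ref{nongenus} passes through to the hypergraph.

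The routine step is then the arithmetic $\tilde{g}(\mathcal{I}(G))\geq \lceil m/2-n_0+2\rceil$ in each subcase: for $n=6$ one gets $\lceil 43/2-21+2\rceil=3$; for even $n\geq 8$, $\lceil(3n-17)/2\rceil\geq 4$; for $n=9$, $\lceil 41/2-19+2\rceil=4$; for $n=p^2>9$, $\lceil(p^2-p-5)/2\rceil\geq 8$; for $n=p^\alpha$ with $\alpha\geq 3$ and $p^\alpha>9$, $\lceil(p^\alpha-5)/2\rceil\geq 11$; and for odd $n$ with two distinct prime divisors (the smallest being $n=15$ with smallest prime divisor $p_1$), $\lceil(n-p_1-5)/2\rceil\geq 3$, using the same observation already exploited in the orientable case that $n-p_1>9$.

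The only mild obstacle, as in the orientable case, is the smallest instances (notably $n=6$ and $n=9$), where generic asymptotic bounds are not enough and one must plug in the explicit subgroup list to read off $n_0$ and $m$. Once those two base cases are handled using the data already tabulated in the preceding theorem, the remaining subcases are strictly looser and follow from the same inequalities, so no new combinatorial input is needed beyond Lemma \ref{nongenus} and Theorem \ref{incidencegraph1}.
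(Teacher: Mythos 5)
Your proposal is correct, but for most values of $n$ it takes a genuinely different route from the paper. The paper only uses the triangle-free edge-count bound (Lemma \ref{nongenus}) in the single case $n=6$; for $n=8$ it builds five hyperedges sharing three common vertices, and for $n\geq 9$ three hyperedges sharing five common vertices, so that in each case the incidence graph contains $K_{3,5}$, and then invokes the exact formula $\tilde{g}(K_{3,5})=\lceil (3-2)(5-2)/2\rceil=2$ from Lemma \ref{kngenus} together with Theorem \ref{incidencegraph1}. You instead recycle the subhypergraphs $G$ and the counts $(n_0,m)$ already tabulated in the orientable-genus proof and apply Lemma \ref{nongenus} uniformly, using bipartiteness of $\mathcal{I}(G)$ to rule out triangles and monotonicity of $\tilde{g}$ under subgraphs to transfer the bound. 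Your arithmetic checks out in every subcase (e.g.\ $\lceil (3n-17)/2\rceil\geq 4$ for even $n\geq 8$, $\lceil (n-p_1-5)/2\rceil\geq 3$ for odd non-prime-power $n$), and the bounds you obtain are at least as strong as the paper's; the one hypothesis of Lemma \ref{nongenus} you leave implicit is connectivity of $\mathcal{I}(G)$, which holds here because the chosen hyperedges share almost all of the reflection subgroups $H_i$, but you should state it. What your approach buys is uniformity (one lemma throughout, no separate $K_{3,5}$ constructions and no appeal to Lemma \ref{kngenus}); what the paper's approach buys is that the $K_{3,5}$ witnesses are explicit and avoid re-deriving the incidence-graph edge counts.
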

	\begin{proof}
		If the non-orientable genus,	$\tilde{g}(\tilde{\Gamma}_\mathcal{H}(D_n))$ is at least 2, then clearly  $\tilde{g}(\tilde{\Gamma}_\mathcal{H}(D_n))$ is non-planar.
		
		Conversely, suppose that $\tilde{\Gamma}_\mathcal{H}(D_n)$ is non-planar. By Theorem \ref{planar},  $\tilde{\Gamma}_\mathcal{H}(D_n)$ is non-planar iff neither $n$ is a prime nor  $ n= 4$. To prove that the non-orientable genus,	$\tilde{g}(\tilde{\Gamma}_\mathcal{H}(D_n))$ is at least 2, consider the following cases:
		
		\noindent $\underline{\textbf{Case 1.}}$ Suppose that $n=6$. 
		The incidence graph $\mathcal{I}(\tilde{\Gamma}_\mathcal{H}(D_6))$ of $\tilde{\Gamma}_\mathcal{H}(D_6)$ does not contain a cycle of length 3. In  $\mathcal{I}(\tilde{\Gamma}_\mathcal{H}(D_6))$,  the number of vertices, $n_0 = 21$, and the number of edges, $q=43$. Thus, by Lemma \ref{nongenus}, $\tilde{g}(\mathcal{I}(\tilde{\Gamma}_\mathcal{H}(D_6))) \geq \left\lceil \dfrac{43}{2} - 21 +2 \right\rceil = 3.$ Consequently, by Theorem \ref{incidencegraph1}, $\tilde{g}(\tilde{\Gamma}_\mathcal{H}(D_6)) > 2$. 
		
		\noindent $\underline{\textbf{Case 2.}}$ Suppose that $n=8$. Consider $K_1 =<a^4>, K_2=<a^2>, K_3=<a>, K_4=<a^4,a^3b>$ and $K_5=<a^2,ab>$. Observe that  $K_i \cap K_j \neq \{e\}$ for $i,j \in \{1,2,3,4,5\}$. Choose $l_1,l_2,l_3$ from the set $ \{0,1,2,\cdots,n-1\}$ such that $l_i \not\equiv 3 (mod \, 4)$ and $l_i \not\equiv 1 (mod \, 2)$ for $ i \in \{1,2,3\}$. Then, for $H_1=<a^{l_1}b>, H_2=<a^{l_2}b>$ and $H_3=<a^{l_3}b>$, we have $H_i \cap H_j =\{e\}$ for distinct $i,j \in \{1,2,3\}$. Also, $H_i \cap K_j =\{e\}$ for $i \in \{1,2,3\}$ and $ j \in \{1,2,3,4,5\}$. Therefore, there exist five distinct hyperdges $e_1,e_2,e_3,e_4,e_5$ such that $e_1$ contains $H_1,H_2,H_3 \,\text{and}\, K_1$, $e_2$ contains $H_1,H_2,H_3 \,\text{and}\, K_2$, $e_3$ contains $H_1,H_2,H_3 \,\text{and}\, K_3$, $e_4$ contains $H_1,H_2,H_3 \,\text{and}\, K_4$, and $e_5$ contains $H_1,H_2,H_3 \,\text{and}\, K_5$. Consider the  subhypergraph $G_1$ of $\tilde{\Gamma}_\mathcal{H}(D_8)$ with  $  \{e_1,e_2,e_3,e_4,e_5\}$ as the  hyperedge set and  the set of all vertices in $e_1,e_2,e_3,e_4\, \text{and} \,\, e_5$ as the vertex set of $G_1$. Thus, the incidence graph $\mathcal{I}(G_1)$ of $G_1$ contains $K_{3,5}$ as a subgraph. By Lemma \ref{kngenus}, $\tilde{g}(K_{3,5}) = 2$. Consequently, by Theorem \ref{incidencegraph1},  $\tilde{g}(\tilde{\Gamma}_\mathcal{H}(D_8)) \geq 2.$

		\begin{figure}[h]
			\centering\subfloat[Subhypergraph $G_1$ of $\tilde{\Gamma}_\mathcal{H}(D_8)$]{	\begin{tikzpicture}[scale=.6]
					
					% Draw ellipses with clear borders
					\draw[thick] (-2,0) ellipse (3.5 and 1.2);
					\draw[thick] (2,0) ellipse (3.5 and 1.2);
					\draw[rotate=330,thick] (-.1,-1.8) ellipse (1.2 and 4);
					\draw[rotate=30,thick] (-.2,-2) ellipse (1.2 and 4);
					\draw[thick] (0,.5) ellipse (1.2 and 3.5);
					
					% Add labels for ellipses
					\node at (-4, 1.2) {$e_1$};
					\node at (-1.8, 1.4) {$e_2$};
					\node at (1.7, 1.4) {$e_3$};
					\node at (4, 1.2) {$e_4$};
					\node at (0, 4.2) {$e_5$};
					
					% Add nodes
					
					\filldraw (-0.6, 0.5cm) circle (2pt) node[right] {${H_1}$};
					\filldraw (-0.6, 0cm) circle (2pt) node[right] {${H_2}$};
					\filldraw (-0.6, -.5cm) circle (2pt) node[right] {${H_3}$};
					\filldraw (1.3, -2cm) circle (2pt) node[below] {${K_2}$};
					
					\filldraw (-2.5, 0cm) circle (2pt) node[below] {${K_1}$};
					\filldraw (-1.3, -2cm) circle (2pt) node[below] {${K_3}$};
					\filldraw (3, 0cm) circle (2pt) node[below] {${K_4}$};
					\filldraw (0, 3cm) circle (2pt) node[below] {${K_5}$};
					
			\end{tikzpicture}}
			\label{G2}
			\hspace{.50 cm}
			\subfloat[$K_{3,5}$ in $\mathcal{I}(G_1)$]{
				\begin{tikzpicture}[scale=1.5, every node/.style={circle, fill=black, minimum size=6pt, inner sep=0pt}]
					
					% Nodes in set U
					\node[label=left:$H_1$] (u1) at (0,3) {};
					\node[label=left:$H_2$] (u2) at (0,2) {};
					\node[label=left:$H_3$] (u3) at (0,1) {};
					
					% Nodes in set V
					\node[label=right:$e_1$] (v1) at (2,3) {};
					\node[label=right:$e_2$] (v2) at (2,2.5) {};
					\node[label=right:$e_3$] (v3) at (2,2) {};
					\node[label=right:$e_4$] (v4) at (2,1.5) {};
					\node[label=right:$e_5$] (v5) at (2,1) {};
					
					% Edges
					\draw (u1) -- (v1);
					\draw (u1) -- (v2);
					\draw (u1) -- (v3);
					\draw (u1) -- (v4);
					\draw (u1) -- (v5);
					\draw (u2) -- (v1);
					\draw (u2) -- (v2);
					\draw (u2) -- (v3);
					\draw (u2) -- (v4);
					\draw (u2) -- (v5);
					\draw (u3) -- (v3);
					\draw (u3) -- (v2);
					\draw (u3) -- (v1);
					\draw (u3) -- (v4);
					\draw (u3) -- (v5);

			\end{tikzpicture}}
			
			\label{k351}

		\end{figure}

		\noindent $\underline{\textbf{Case 3.}}$ Suppose that $n \geq 9.$ 
	
		\begin{figure}[h]
			\centering\subfloat[Subhypergraph $G_2$ of $\tilde{\Gamma}_\mathcal{H}(D_n)$]{	\begin{tikzpicture}[scale=.5]
					
					% Draw ellipses with clear borders
					\draw[thick] (-2,0) ellipse (4 and 2);
					\draw[thick] (2,0) ellipse (4 and 2);
					\draw[thick] (0,-2) ellipse (2 and 4);
					
					% Add labels for ellipses
					\node at (-4, 2) {$e_1$};
					\node at (0, 2.2) {$e_2$};
					\node at (4, 2) {$e_3$};
					
					% Add nodes
					
					\filldraw (-1.2, .5cm) circle (2pt) node[right] {${H_1}$};
					\filldraw (0.2, .5cm) circle (2pt) node[right] {${H_2}$};
					\filldraw (-0.5, -.2cm) circle (2pt) node[right] {${H_3}$};
					\filldraw (-1.2, -.9cm) circle (2pt) node[right] {${H_4}$};
					\filldraw (0.2, -.9cm) circle (2pt) node[right] {${H_5}$};
					\filldraw (0.2, -2.5cm) circle (2pt) node[below] {${K_2}$};
					\filldraw (-2.5, 0cm) circle (2pt) node[below] {${K_1}$};
					\filldraw (2.5, 0cm) circle (2pt) node[below] {${K_3}$};
					
			\end{tikzpicture}}
			\label{G1}
			\hspace{.1 cm}
			\subfloat[$K_{3,5}$ in $\mathcal{I}(G_2)$]{
				\begin{tikzpicture}[scale=1.5, every node/.style={circle, fill=black, minimum size=6pt, inner sep=0pt}]
					
					% Nodes in set U
					\node[label=right:$H_1$] (u1) at (2,3) {};
					\node[label=right:$H_2$] (u2) at (2,2.5) {};
					\node[label=right:$H_3$] (u3) at (2,2) {};
					\node[label=right:$H_4$] (u4) at (2,1.5) {};
					\node[label=right:$H_5$] (u5) at (2,1) {};
					
					% Nodes in set V
					\node[label=left:$e_1$] (v1) at (0,3) {};
					\node[label=left:$e_2$] (v2) at (0,2) {};
					\node[label=left:$e_3$] (v3) at (0,1) {};
					
					% Edges
					\draw (u1) -- (v1);
					\draw (u1) -- (v2);
					\draw (u1) -- (v3);
					\draw (u2) -- (v1);
					\draw (u2) -- (v2);
					\draw (u2) -- (v3);
					\draw (u3) -- (v3);
					\draw (u3) -- (v2);
					\draw (u3) -- (v1);
					\draw (u4) -- (v1);
					\draw (u4) -- (v2);
					\draw (u4) -- (v3);
					\draw (u5) -- (v1);
					\draw (u5) -- (v2);
					\draw (u5) -- (v3);
					
			\end{tikzpicture}}
			
			\label{k35}

		\end{figure}
			Consider $K_1=<a>, K_2=<a^p>$ and $K_3 = <a^p,b>$, where $p$ is a prime divisor of $n$. Observe that $K_i \cap K_j \neq \{e\}$ for $i,j \in \{1,2,3\}$. Choose $l_1,l_2,l_3,l_4$ and $l_5$ from the set $\{0,1,2,\cdots,n-1\}$ such that $l_i \not\equiv 0 (mod \, p)$ for all $i \in \{1,2,3,4,5\}$. Then, for $H_1=<a^{l_1}b>,$ $ H_2=<a^{l_2}b>,$ \\$ H_3=<a^{l_3}b>, H_4=<a^{l_4}b>$ and $H_5=<a^{l_5}b>$, we have $H_i \cap H_j =\{e\}$ for distinct $i,j \in \{1,2,3\}$. Also, $H_i \cap K_j =\{e\}$ for $i \in \{1,2,3,4,5\}$ and $j \in \{1,2,3\}$. Therefore, there exist three distinct hyperedges $e_1,e_2,e_3$ such that $e_1$ contains $H_1,H_2,H_3,H_4,H_5 \, \text{and}\, K_1$, $e_2$ contains $H_1,H_2,H_3,H_4,H_5 \, \text{and}\, K_2$, and $e_3$ contains $H_1,H_2,H_3,H_4,H_5 \, \text{and}\, K_3$. 		Consider the  subhypergraph $G_2$ of $\tilde{\Gamma}_\mathcal{H}(D_n)$ with  $  \{e_1,e_2,e_3\}$ as the  hyperedge set and  the set of all vertices in $e_1,e_2\, \text{and} \,\, e_3$ as the vertex set of $G_2$. Thus, the incidence graph $\mathcal{I}(G_2)$ of $G_2$ contains $K_{3,5}$ as a subgraph.  By Lemma \ref{kngenus}, $\tilde{g}(K_{3,5}) = 2$. Consequently, by Theorem \ref{incidencegraph1},  $\tilde{g}(\tilde{\Gamma}_\mathcal{H}(D_n)) \geq 2.$	
	\end{proof}
	
	\begin{corollary}
		If $\tilde{\Gamma}_\mathcal{H}(D_n)$ is non-planar, then it cannot be embedded on a projective plane.
	\end{corollary}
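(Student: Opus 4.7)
The plan is to prove the nontrivial direction: assuming $\tilde{\Gamma}_\mathcal{H}(D_n)$ is non-planar, show $\tilde g(\tilde{\Gamma}_\mathcal{H}(D_n))\ge 2$. The converse is immediate from the definition of the non-orientable genus. By Theorem \ref{planar} I may assume $n$ is neither prime nor equal to $4$, and by Theorem \ref{incidencegraph1} it suffices to bound $\tilde g(\mathcal I(\tilde\Gamma_\mathcal H(D_n)))$. Since the incidence graph is bipartite, it is automatically triangle-free, which puts both Lemma \ref{nongenus} (an edge-count lower bound) and Lemma \ref{kngenus} (via the fact $\tilde g(K_{3,5})=2$) at my disposal.

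The generic strategy is to embed $K_{3,5}$ as a subgraph of $\mathcal I(\tilde\Gamma_\mathcal H(D_n))$, i.e.\ to find three hyperedges sharing five common vertices. Picking a prime divisor $p$ of $n$, I take the three pairwise non-disjoint subgroups $K_1=\langle a\rangle$, $K_2=\langle a^p\rangle$, $K_3=\langle a^p,b\rangle$, which must therefore lie in three distinct maximal hyperedges $e_1,e_2,e_3$. For the five shared vertices I use reflection subgroups $H_j=\langle a^{l_j}b\rangle$; the only nontrivial constraint is $a^{l_j}b\notin K_3$, equivalently $l_j\not\equiv 0\pmod p$, leaving $n-n/p$ admissible exponents. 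This is at least $5$ whenever $n\ge 9$, so the generic case is settled in one shot via Lemma \ref{kngenus} and Theorem \ref{incidencegraph1}.

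The residual cases $n=6$ and $n=8$ are the main source of friction, since there $n-n/p$ equals $3$ and $4$ respectively and the generic recipe collapses. For $n=6$ I would enumerate the hyperedges directly (the list already appears in the proof of the preceding theorem), obtain $|V(\mathcal I)|=21$ and $|E(\mathcal I)|=43$, and apply Lemma \ref{nongenus} to get $\tilde g\ge\lceil 43/2-21+2\rceil=3$. For $n=8$ I would dualise the $K_{3,5}$ recipe: instead of three subgroups and five reflections, use five pairwise non-disjoint subgroups, namely the three cyclic ones $\langle a^4\rangle,\langle a^2\rangle,\langle a\rangle$ together with the Type (2) subgroups $\langle a^4,a^3b\rangle$ and $\langle a^2,ab\rangle$ (all of which share $a^4$), together with only three reflection subgroups $H_1,H_2,H_3$ whose exponents are even. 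This single parity condition simultaneously forces disjointness from both Type (2) subgroups, giving $K_{5,3}\cong K_{3,5}$ inside $\mathcal I(\tilde\Gamma_\mathcal H(D_8))$ and closing the argument. The main obstacle is precisely this sporadic case $n=8$: swapping the two sides of the bipartition is the only genuinely nontrivial trick, while every other case reduces to direct counting.
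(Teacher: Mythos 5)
Your proposal is correct and takes essentially the same route as the paper: the corollary is an immediate consequence of the preceding theorem ($\tilde{\Gamma}_\mathcal{H}(D_n)$ is non-planar iff $\tilde{g}(\tilde{\Gamma}_\mathcal{H}(D_n))\ge 2$), whose proof is exactly your case split --- $n=6$ by direct enumeration and the edge-count bound of Lemma \ref{nongenus}, $n=8$ by the dualized $K_{3,5}$ built from the five mutually intersecting subgroups $\langle a^4\rangle,\langle a^2\rangle,\langle a\rangle,\langle a^4,a^3b\rangle,\langle a^2,ab\rangle$ and three even-exponent reflections, and $n\ge 9$ by the generic $K_{3,5}$ with $K_1=\langle a\rangle$, $K_2=\langle a^p\rangle$, $K_3=\langle a^p,b\rangle$. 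Since a projective-planar graph has crosscap $1$, nothing further is needed once $\tilde{g}\ge 2$ is established.
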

	\begin{corollary}
		$\tilde{\Gamma}_\mathcal{H}(D_n)$ is either planar or has both orientable and non-orientable genus at least two.
	\end{corollary}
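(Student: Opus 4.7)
The plan is to treat the corollary as a direct dichotomy-style consequence of the two preceding theorems rather than working from scratch. I would begin by splitting into the two exhaustive cases dictated by Theorem \ref{planar}: either $\tilde{\Gamma}_\mathcal{H}(D_n)$ is planar (when $n$ is prime or $n=4$), in which case the statement holds trivially by the first alternative of the disjunction, or $\tilde{\Gamma}_\mathcal{H}(D_n)$ is non-planar.

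In the non-planar case, the idea is simply to feed that hypothesis into each of the two immediately preceding theorems in turn. The theorem characterizing the orientable genus states that non-planarity of $\tilde{\Gamma}_\mathcal{H}(D_n)$ is equivalent to $g(\tilde{\Gamma}_\mathcal{H}(D_n)) \geq 2$, so I would invoke it to conclude $g(\tilde{\Gamma}_\mathcal{H}(D_n)) \geq 2$. Symmetrically, the theorem characterizing the non-orientable genus gives $\tilde{g}(\tilde{\Gamma}_\mathcal{H}(D_n)) \geq 2$. Combining these two conclusions yields the second alternative of the disjunction.

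Since every step is a direct appeal to an already-proved biconditional, there is no genuine obstacle here; the only thing worth emphasizing is that both preceding theorems are formulated as equivalences with the same left-hand side (non-planarity of the hypergraph), so no additional case analysis on $n$ is required in the proof, and the corollary follows in one or two lines. I would not reproduce the genus calculations from the previous proofs, nor the Kuratowski-type argument from Theorem \ref{planar}; they are used purely as black boxes.
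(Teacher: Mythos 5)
Your proof is correct and is exactly the intended argument: the paper states this corollary without proof as an immediate consequence of the two preceding theorems, which together say that non-planarity of $\tilde{\Gamma}_\mathcal{H}(D_n)$ is equivalent to $g(\tilde{\Gamma}_\mathcal{H}(D_n)) \geq 2$ and to $\tilde{g}(\tilde{\Gamma}_\mathcal{H}(D_n)) \geq 2$. Your two-case dichotomy (planar, or else apply both equivalences) is precisely the right reading.
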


	\section*{Acknowledgments}
	This research work is an outcome of the project supported by the Institute of Eminence (UoH-IoE-RC5-22-021), University of Hyderabad.

\end{document}